\title{Two polarized K3 surfaces associated to the same cubic fourfold}
\author{Emma Brakkee}
\address{Mathematisches Institut, Universit\"at Bonn, Endenicher Allee 60, 53115 Bonn, Germany}
\email{brakkee@math.uni-bonn.de}
\def\thmhead@plain#1#2#3{%
  \thmname{#1}\thmnumber{\@ifnotempty{#1}{ }\@upn{#2}}%
  \thmnote{ {\the\thm@notefont#3}}}
\let\thmhead\thmhead@plain
\newcommand\blfootnote[1]{
  \begingroup
  \renewcommand\thefootnote{}\footnote{#1}
  \addtocounter{footnote}{-1}
  \endgroup
}
\newcommand{\htab}{\hspace*{0.85em}}
\newcommand{\twostar}{$(\ast$$\ast)$} 
\newcommand{\twostarsp}{$(\ast$$\ast)$ } 
\newcommand{\threestar}{$(\ast$$\ast$$\ast)$} 
\newcommand{\threestarsp}{$(\ast$$\ast$$\ast)$ } 
\newcommand{\Z}{\mathbb{Z}}
\newcommand{\Q}{\mathbb{Q}}
\renewcommand{\P}{\mathbb{P}}
\newcommand{\C}{\mathbb{C}}
\newcommand{\R}{\mathbb{R}}
\newcommand{\N}{\mathbb{N}}
\newcommand{\mM}{\mathcal{M}}
\newcommand{\mC}{\mathcal{C}}
\newcommand{\mE}{\mathcal{E}}
\newcommand{\mCbar}{\overline{\mC}}
\newcommand{\Stau}{S^{\tau}}
\newcommand{\Ltau}{L^{\tau}}
\DeclareMathOperator{\modulo}{mod}
\renewcommand{\mod}{\:\modulo \:}
\newcommand{\lat}{\Lambda}
\DeclareMathOperator{\disc}{disc}
\DeclareMathOperator{\Disc}{Disc}
\DeclareMathOperator{\kthree}{K3}
\newcommand{\ktlat}{\Lambda_{\kthree}}
\newcommand{\polktlat}{\Lambda_d}
\newcommand{\extktlat}{\widetilde{\Lambda}_{\kthree}}
\DeclareMathOperator{\muk}{Muk}
\newcommand{\mukailat}{\Lambda_{\muk}}
\newcommand{\epart}{E_8(-1)^{\oplus 2}}
\DeclareMathOperator{\cub}{cub}
\newcommand{\cublat}{\Lambda_{\cub}}
\newcommand{\primcublat}{\Lambda_{\cub}^0}
\DeclareMathOperator{\HH}{H}
\DeclareMathOperator{\prim}{prim}
\DeclareMathOperator{\tO}{O}
\DeclareMathOperator{\id}{id}
\DeclareMathOperator{\rk}{rk}
\DeclareMathOperator{\norm}{norm}
\DeclareMathOperator{\hilb}{Hilb}
\DeclareMathOperator{\bir}{bir}
\DeclareMathOperator{\pic}{Pic}
\DeclareMathOperator{\ns}{NS}
\newcommand{\stO}{\widetilde{\tO}}
\newcommand{\kdperp}{K_d^{\perp}}
\newcommand{\vdlat}{\langle v_d\rangle}
\newcommand{\vdgroup}{\stO(\primcublat,v_d)}
\newcommand{\pmvdgroup}{\stO(\primcublat,\vdlat)}
\newcommand{\dom}{\mathcal{D}}
\newcommand{\qdom}{\mathcal{QD}}
\theoremstyle{plain}
\newtheorem{theorem}{Theorem}[section]
\newtheorem{proposition}[theorem]{Proposition}
\newtheorem{lemma}[theorem]{Lemma}
\newtheorem{corollary}[theorem]{Corollary}
\newtheorem{maintheorem}{Theorem}
\theoremstyle{definition}
\newtheorem{definition}[theorem]{Definition}
\newtheorem{remark}[theorem]{Remark}
\newtheorem{example}[theorem]{Example}
\begin{document}

\begin{abstract}
For infinitely many $d$, Hassett showed that special cubic fourfolds of discriminant $d$
are related to polarized K3 surfaces of degree $d$ via their Hodge structures.
For half of the $d$, each associated K3 surface $(S,L)$ canonically yields another one, 
$(S^{\tau},L^{\tau})$.
We prove that $S^{\tau}$ is isomorphic to the moduli space of stable coherent sheaves
on $S$ with Mukai vector $(3,L,d/6)$. We also explain for which $d$ the Hilbert schemes
$\text{Hilb}^n(S)$ and $\text{Hilb}^n(S^{\tau})$ are birational.
\end{abstract}

\maketitle

\blfootnote{The author is supported by the Bonn International Graduate School of Mathematics
and the SFB/TR 45 `Periods, Moduli Spaces and Arithmetic of Algebraic Varieties' of the DFG
(German Research Foundation).}

Special cubic fourfolds were first studied by Hassett \cite{HassettPaper}.
They are distinguished by the property that they carry additional algebraic cycles.
They arise in countably many families, parametrized by
irreducible divisors $\mC_d$ in the moduli space of cubic fourfolds.
For infinitely many $d$, the cubic fourfolds in $\mC_d$ are related to
polarized K3 surfaces of degree $d$ via their Hodge structures.
For half of the $d$, K3 surfaces associated to generic cubics in $\mC_d$ come in pairs.
The goal of this paper is to explain how two such K3 surfaces are related.

\medskip
More precisely, denote by $\mM_d$ the moduli space of polarized K3 surfaces of degree $d$.
Hassett constructed, for admissible $d$, a surjective rational map $\mM_d \dashrightarrow \mC_d$
sending a K3 surface to a cubic fourfold it is associated to.
This map is of degree two when $d\equiv 0\mod 6$ and generically injective otherwise.
In the former case, its (regular) covering involution $\tau\colon \mM_d\to\mM_d$
does not depend on the choices made to construct $\mM_d \dashrightarrow \mC_d$.
We prove the following geometric description of $\tau$.

\begin{maintheorem}[(see Thm.\ \ref{VectorPlusPol})]\label{MukaiVector}
Let $(\Stau,\Ltau)=\tau(S,L)$. Then $\Stau$ is isomorphic to the moduli space
$M_S(v)$ of stable coherent sheaves on $S$ with Mukai vector $v=(3,L,d/6)$.
\end{maintheorem}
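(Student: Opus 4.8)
The plan is to reduce the statement to an equality of periods and then invoke the Torelli theorem for polarized K3 surfaces. First I would record that $v=(3,L,d/6)$ is a primitive Mukai vector with
\[
\langle v,v\rangle \;=\; L^2 - 2\cdot 3\cdot \tfrac{d}{6} \;=\; d-d \;=\; 0,
\]
so that for a $v$-generic polarization on $S$ — which holds for very general $(S,L)\in\mM_d$ — the moduli space $M_S(v)$ is a smooth projective K3 surface. By the results of Mukai and Yoshioka there is then a canonical Hodge isometry $\HH^2(M_S(v),\Z)\cong v^{\perp}/\Z v$, the right-hand side being taken inside the Mukai lattice $\widetilde{\HH}(S,\Z)$. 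Since all components of $v$ are algebraic, the period $\sigma_S$ of $S$ lies in $v^{\perp}$, the Hodge structure descends, and the restriction of this isometry identifies the transcendental lattice $T(S)$ with $T(M_S(v))$. In particular $M_S(v)$ is again associated to the same cubic fourfold $X$ as $(S,L)$.

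Next I would exhibit the polarization. The class $w=(0,L,d/3)$ lies in $v^{\perp}$, because $\langle v,w\rangle = L^2-3\cdot\tfrac{d}{3}=0$, and it has $w^2=L^2=d$; its last entry is integral precisely because $d\equiv 0\bmod 6$. Its image $\ell$ under the Mukai isometry is a class of square $d$ on $M_S(v)$ which, for very general $(S,L)$, is primitive and (up to sign) ample, so that $(M_S(v),\ell)$ is a polarized K3 surface of degree $d$. Both $(S,L)$ and $(M_S(v),\ell)$ have transcendental Hodge structure isometric to the one carried by $\kdperp\subset\HH^4(X,\Z)$, so both are points of the fibre of $\mM_d\dashrightarrow\mC_d$ over $X$. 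Since this map has degree two for $d\equiv 0\bmod 6$, this already forces $(M_S(v),\ell)$ to be either $(S,L)$ or $\tau(S,L)=(\Stau,\Ltau)$.

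The heart of the argument is to decide which of the two it is, and this is where I expect the main difficulty. Comparing a marking of $\HH^2(S,\Z)$ with the marking of $\HH^2(M_S(v),\Z)$ coming from the Mukai isometry above, I would compute the induced isometry between the primitive lattices $\langle L\rangle^{\perp}\cong\polktlat$ and $\langle\ell\rangle^{\perp}\cong\polktlat$ (both containing the common copy of $T(S)$) and, crucially, the automorphism it induces on the discriminant group $\Z/d$. The rank-three entry of $v$ is exactly what makes this automorphism nontrivial: I expect it to be multiplication by a unit acting nontrivially on the discriminant form, whose existence and nontriviality are tied to the congruence $d\equiv 0\bmod 6$, and which represents the nontrivial coset of $\stO(\polktlat)$ in $\tO(\polktlat)$ that defines $\tau$. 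Matching this glueing with the isometry $v_d\mapsto -v_d$ of $\primcublat$ used to define $\tau$ on the cubic side — that is, checking that the coset computed from the factor $3$ really is the nontrivial class of $\pmvdgroup/\vdgroup$ — is the delicate point, and the bookkeeping with discriminant forms is where the genuine content lies.

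Granting this computation, the markings of $(S,L)$ and $(M_S(v),\ell)$ differ by the $\tau$-coset, so their periods satisfy the relation defining $\tau$; by the Torelli theorem $(M_S(v),\ell)\cong(\Stau,\Ltau)$. In writing this up I would still verify the routine points: that $\ell$ is primitive modulo $v$ so that its degree is exactly $d$, that it can be taken ample after a sign change together with an orientation check, and that the identification is compatible with the period maps. As everything varies in families over $\mM_d$ and $\tau$ is a regular involution, the isomorphism established for very general $(S,L)$ extends to the generic member, giving the theorem.
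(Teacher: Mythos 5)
Your setup is sound: $v=(3,L,d/6)$ is primitive and isotropic, the Mukai isometry $\HH^2(M_S(v),\Z)\cong v^{\perp}/\Z v$ applies, and your class $w=(0,L,d/3)$ is indeed the correct polarization (it agrees, up to sign and modulo $\Z v$, with the class $\Ltau=(d,(d/3-1)L,d/3(d/6-1))$ appearing in Thm.~\ref{VectorPlusPol}). The argument breaks, however, exactly where the content of the theorem sits, and in two places. First, the dichotomy ``$(M_S(v),\ell)$ is associated to $X$, hence lies in the fibre of $\varphi$ over $X$, hence equals $(S,L)$ or $(\Stau,\Ltau)$'' does not follow. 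Being associated to $X$ only says that the period of $(M_S(v),\ell)$ lies in the orbit of the period of $(S,L)$ under $\tO(\polktlat)/\stO(\polktlat)\cong\tO(\Disc\polktlat)$ acting on $\qdom(\polktlat)$, whereas the fibre of the \emph{fixed} map $\varphi$ is only the orbit under the two-element subgroup generated by the class $\overline{g}$ of $\pmvdgroup/\vdgroup$. These can genuinely differ: for $d=2\cdot 3\cdot 7\cdot 13=546$, which satisfies \twostar\ and $6\mid d$, the group $\tO(\Disc\polktlat)$ has order $8$, so a very general $X\in\mC_d$ has four associated polarized K3 surfaces, falling into two disjoint $\tau$-pairs, while each fibre of $\varphi$ contains only one pair. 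So what you have proved is that $M_S(v)$ is \emph{some} K3 surface associated to $X$; this neither rules out $M_S(v)\cong S$ nor that $(M_S(v),\ell)$ lies in a different $\tau$-pair altogether.

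Second, the step that would repair this---computing the discriminant action of the isometry induced by the Mukai identification and matching it with $\overline{g}$---is precisely the step you defer (``I expect it to be multiplication by a unit\dots'', ``Granting this computation\dots''). That computation is the entire substance of the paper's proof, which runs in the opposite direction: one shows that $\overline{g}$ acts on $\Disc\kdperp\cong\Z/d\Z$ as multiplication by $d/3-1$ (a short but nontrivial argument with the discriminant quadratic form), one writes down an explicit involution $u$ of $\Z\ell_d\oplus U_4$ inducing the same multiplication on $\Disc(\Z\ell_d\oplus U_4)\cong\Z/d\Z$, so that $g\oplus u$ glues by Lemma~\ref{extendauto} to $\widetilde{g}\in\tO(\extktlat)$, and then one simply reads off $v=\widetilde{g}^{-1}(0,0,1)=(3,L,d/6)$ and $\Ltau=\widetilde{g}^{-1}(\ell_d)$; derived Torelli and Mukai's description of $\HH^2(M_S(v),\Z)$ then give $\Stau\cong M_S(v)$ for every $(S,L)\in\mM_d$, with no very-general/specialization argument needed. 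Identifying where the difficulty lies is not the same as resolving it: as written, your proposal establishes only that $M_S(v)$ is a degree-$d$ polarized K3 surface associated to the same cubic, which is strictly weaker than the theorem.
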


In particular, $S$ and $\Stau$ are Fourier--Mukai partners. For general $(S,L)\in\mM_d$,
this also follows from the fact that the bounded derived categories of $S$
and $\Stau$ are both exact equivalent to the Kuznetsov category of
the image cubic fourfold \cite{AddingtonThomas}.
If $\rho(S)=1$, then $S$ is not isomorphic to $\Stau$ (as unpolarized K3 surfaces).
The number of Fourier--Mukai partners of $S$, which depends on $d$,
can be arbitrarily high \cite{Oguiso}.
The above gives a natural way of selecting one of them for each $(S,L)\in\mM_d$.

\medskip
We also explain when the Hilbert schemes of $n$ points $\hilb^n(S)$ and $\hilb^n(\Stau)$
are birational. Our main result is the following.

\begin{maintheorem}[(see Prop.~\ref{BirModelHilb}, Cor.~\ref{HilbIsoCor})]\label{HilbIso}
Suppose that $\rho(S)=1$. The following are equivalent:
\begin{enumerate}[label=\emph{(\roman*)}]
 \item $\hilb^2(S)$ and $\hilb^2(\Stau)$ are birational;
 \item $\hilb^2(S)$ and $\hilb^2(\Stau)$ are isomorphic;
 \item There exists an integral solution to the equation $3p^2-(d/6)q^2=-1$.
\end{enumerate}
\end{maintheorem}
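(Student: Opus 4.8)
The plan is to reduce the geometric statement to a lattice-theoretic one via the global Torelli theorem for hyperkähler manifolds. Both $\hilb^2(S)$ and $\hilb^2(\Stau)$ are of $\kthree^{[2]}$-type, so by Verbitsky's global Torelli theorem together with Markman's description of the monodromy group, they are birational if and only if there is a parallel-transport Hodge isometry $\HH^2(\hilb^2(S),\Z)\to\HH^2(\hilb^2(\Stau),\Z)$ for the Beauville--Bogomolov form, and isomorphic if and only if in addition this isometry can be chosen to send an ample class to an ample class. This makes (ii)$\Rightarrow$(i) immediate and reduces the substantive content to establishing (i)$\Leftrightarrow$(iii) and the promotion (i)$\Rightarrow$(ii).

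First I would make the relevant lattices explicit. Since $\rho(S)=1$, we have $\ns(\hilb^2(S))\cong\langle d\rangle\oplus\langle-2\rangle$, the $\langle-2\rangle$ being generated by the half-diagonal $\delta$, and likewise for $\Stau$. By Theorem~\ref{MukaiVector} we have $\Stau\cong M_S(v)$ with $v=(3,L,d/6)$, so $S$ and $\Stau$ are Fourier--Mukai partners and the induced Hodge isometry of Mukai lattices restricts to a Hodge isometry of transcendental lattices $T(S)\cong T(\Stau)$. Using the Mukai isomorphism $\HH^2(M_S(u),\Z)\cong u^{\perp}$ inside the Mukai lattice $\widetilde{\HH}(S,\Z)$, I would realize $\HH^2(\hilb^2(S),\Z)=w^{\perp}$ with $w=(1,0,-1)$ and, transporting along the Fourier--Mukai isometry, $\HH^2(\hilb^2(\Stau),\Z)=(w')^{\perp}$, where $w'$ is the image of the Hilbert-scheme class of $\Stau$. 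The algebraic part of $\widetilde{\HH}(S,\Z)$ is $U\oplus\langle d\rangle$, with discriminant group $\Z/d$, and both $w$ and $w'$ are primitive of square two lying in it.

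The heart of the proof is to decide whether the abstract matching of $w$ and $w'$ can be realized by a \emph{parallel-transport Hodge} isometry, i.e.\ by an orientation-preserving isometry of $U\oplus\langle d\rangle$ acting as $\pm\id$ on the discriminant group $\Z/d$, so that it glues with $\pm\id$ on $T(S)$ to an honest Hodge isometry. Both classes are primitive of square two, so by Eichler's criterion they are equivalent under the full orthogonal group; the real question is whether the stabilizer of $w$ acts on $\Z/d$ by a large enough subgroup for this equivalence to be chosen compatibly with the discriminant form and the Hodge structure. Unwinding this condition, using the explicit coordinates $v=(3,L,d/6)$, converts it into the solvability of the norm equation $3p^2-(d/6)q^2=-1$; equivalently, into the existence of a square-two class in $\langle-6\rangle\oplus\langle d/3\rangle$. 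I expect this discriminant-form bookkeeping to be the main obstacle: one must pin down the genuine monodromy-compatible isometry produced by the Fourier--Mukai transform rather than an arbitrary isometry of $U\oplus\langle d\rangle$, track all sign choices, and treat separately the degenerate range $\gcd(d/6,3)=3$, where $v$ has divisibility three, the equation has no solution for divisibility reasons, and the Hilbert schemes are never birational.

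Finally, to upgrade birational to isomorphic — the implication (i)$\Rightarrow$(ii) — I would analyze the movable cone of $\hilb^2(S)$ through the Bayer--Macrì wall-and-chamber description. For $\rho(S)=1$ the Néron--Severi lattice $\langle d\rangle\oplus\langle-2\rangle$ is small enough to enumerate the potential wall-classes, and I expect that whenever the isometry constructed above exists it already carries the ample cone of one side into the ample cone of the other, so that no wall separates the two chambers and the birational map extends to an isomorphism. Together with the trivial implication (ii)$\Rightarrow$(i) and the equivalence (i)$\Leftrightarrow$(iii), this closes the cycle.
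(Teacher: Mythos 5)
Your overall architecture runs parallel to the paper's, but at the two places where the actual content lies you substitute expectations for arguments, so as it stands this is not a proof. For (i)$\Leftrightarrow$(iii), the paper does not redo the Torelli/lattice-gluing analysis you sketch: it invokes the numerical criterion of \cite{MMY} for derived equivalent Picard-rank-one K3 surfaces (Proposition~\ref{BirNumbers}), and feeds into it the identification $\Stau\cong M_S(3,L,d/6)$ from Theorem~\ref{VectorPlusPol}, so that birationality of $\hilb^2(S)$ and $\hilb^2(\Stau)$ becomes solvability of $3p^2-(d/6)q^2=\pm 1$. Your plan is essentially to reprove that criterion from scratch via Verbitsky--Markman Torelli plus discriminant-form gluing; that route can in principle be carried out (it is how such criteria are established), but the step you yourself label ``the main obstacle'' --- unwinding the gluing condition into the norm equation --- \emph{is} the theorem, and you leave it undone. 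You also never address the sign: any such analysis naturally produces $3p^2-(d/6)q^2=\pm 1$, while statement (iii) has only $-1$; the paper disposes of $+1$ by reducing modulo $3$, since $d/6\equiv 1\bmod 3$ would force $q^2\equiv 2\bmod 3$, which is impossible. Relatedly, the ``degenerate range $\gcd(d/6,3)=3$'' you propose to treat separately never occurs: in the setting of the theorem $9\nmid d$, and indeed $d/6\equiv 1\bmod 3$ always holds.

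The gap in (i)$\Rightarrow$(ii) is more serious, because the mechanism you propose is not the one that works. You ``expect'' that whenever the Hodge isometry exists it already carries the ample cone into the ample cone; nothing in your setup supports this, and verifying it for an arbitrary birational map is exactly as hard as the statement itself. What the paper proves (Proposition~\ref{BirModelHilb}) is a stronger fact about $\hilb^2(S)$ \emph{alone}, independent of any isometry: when $\rho(S)=1$ and $3\mid d$, the Hilbert scheme $\hilb^2(S)$ has a unique birational model. By Debarre--Macr\`i, walls of the ample cone inside the movable cone would come from classes $aL+b\delta\in\ns(\hilb^2(S))$ of square $-10$ and divisibility two; this leads to $b^2-a^2d/2=5$, hence $b^2\equiv 2\bmod 3$, which is impossible. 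So there are no walls, any birational map from $\hilb^2(S)$ to a hyperkähler manifold pulls back ample classes to ample classes, and is therefore an isomorphism. This concrete mod-$3$ arithmetic --- which appears twice, once to kill the $+1$ case and once to kill the walls --- is the entire computational content of the theorem, and it is absent from your proposal.
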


We will see that this condition is satisfied for infinitely many $d$ but not for all of them.
As an application, we obtain an example of derived equivalent Hilbert schemes of two points
on K3 surfaces which are not birational.

\subsection*{Acknowledgements}
This work is part of my research as a PhD candidate.
I am grateful to my advisor Daniel Huybrechts for many helpful suggestions and comments.
I would also like to thank Giovanni Mongardi and Andrey Soldatenkov for their help,
and Pablo Magni for comments on the first version.

\section{Lattices}
In this section we set up the notation for the lattice theory that will be needed, see
\cite[Ch.~14]{LecturesOnK3} for references.

\medskip
For a lattice $\lat$ with intersection form $(\;,\;)\colon \lat\times \lat\to\Z$,
we denote by $\lat^{\vee}$ its dual lattice
and by $\Disc\lat=\lat^{\vee}/\lat$ its discriminant group.
This is a finite group of order $|\disc\lat|$,
the absolute value of the discriminant of $\lat$.
Every orthogonal transformation $g\in\tO(\lat)$ of $\lat$ induces an automorphism
on $\Disc\lat$, which we denote by $\overline{g}$.
For a primitive sublattice $\lat_1$ of a unimodular lattice $\lat$
with orthogonal complement $\lat_2=\lat_1^{\perp}$,
there exists a natural isomorphism $\Disc\lat_1\cong\Disc\lat_2$.
We will use the following fact\footnote{
A proof in the case of even lattices (for a slightly weaker statement) can be found in
\cite[Prop.~14.2.6]{LecturesOnK3}. The general statement is proven similarly.
}:

\begin{lemma}\label{extendauto}
If $g_1\in\tO(\lat_1)$ and $g_2\in\tO(\lat_2)$, then
${g_1\oplus g_2\colon \lat_1\oplus \lat_2\to \lat_1\oplus \lat_2}$
extends to an orthogonal transformation of $\lat$ if and only if $\overline{g}_1=\overline{g}_2$
under the identification ${\Disc\lat_1\cong\Disc\lat_2}$.
\end{lemma}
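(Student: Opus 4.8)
The plan is to realize $\lat$ as an overlattice of $\lat_1\oplus\lat_2$ inside the dual and then to track the extension problem entirely on discriminant groups. Since $\lat_1$ is primitive in the nondegenerate lattice $\lat$, the sublattice $\lat_1\oplus\lat_2$ has finite index in $\lat$; dualizing the inclusion $\lat_1\oplus\lat_2\hookrightarrow\lat$ and using $\lat=\lat^{\vee}$ (unimodularity) yields a chain
$$\lat_1\oplus\lat_2\subseteq\lat\subseteq\lat_1^{\vee}\oplus\lat_2^{\vee}.$$
First I would record the structural fact underlying the identification $\Disc\lat_1\cong\Disc\lat_2$: the image $H:=\lat/(\lat_1\oplus\lat_2)$ inside $\Disc\lat_1\oplus\Disc\lat_2$ is the graph $\{(x,\gamma(x)):x\in\Disc\lat_1\}$ of an isomorphism $\gamma\colon\Disc\lat_1\xrightarrow{\sim}\Disc\lat_2$, and this $\gamma$ is precisely the natural identification in the statement. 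That $H$ is such a graph is forced by unimodularity, since both projections $H\to\Disc\lat_i$ are isomorphisms; this is the content of the cited \cite[Prop.~14.2.6]{LecturesOnK3}, which I would take as given.

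Next I would observe that $g_1\oplus g_2$, being an isometry of $\lat_1\oplus\lat_2$, extends uniquely to a $\Q$-linear isometry of $(\lat_1\oplus\lat_2)\otimes\Q=\lat\otimes\Q$, and that this extension automatically preserves $\lat_1^{\vee}\oplus\lat_2^{\vee}$ (each $g_i$ preserves $\lat_i^{\vee}$), inducing on $\Disc\lat_1\oplus\Disc\lat_2$ exactly the automorphism $\overline{g}_1\oplus\overline{g}_2$. The crucial reformulation is then: $g_1\oplus g_2$ extends to an element of $\tO(\lat)$ if and only if its $\Q$-linear extension carries the intermediate lattice $\lat$ into itself, which happens if and only if $\overline{g}_1\oplus\overline{g}_2$ preserves the subgroup $H$.

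Finally I would unwind the graph condition. Because $H=\{(x,\gamma(x))\}$, the automorphism $\overline{g}_1\oplus\overline{g}_2$ sends a generic element to $(\overline{g}_1(x),\overline{g}_2(\gamma(x)))$, which lies in $H$ for all $x$ exactly when $\overline{g}_2\circ\gamma=\gamma\circ\overline{g}_1$; as $\overline{g}_1$ is bijective, this equality already forces preservation of $H$. Under the identification $\gamma$ the condition reads precisely $\overline{g}_1=\overline{g}_2$, giving both implications simultaneously. The one point requiring care is the structural claim of the first step — that unimodularity makes $H$ the graph of an isomorphism and that this isomorphism is the natural one — which once granted reduces everything to the formal equivalence ``preserve the overlattice $\iff$ preserve the glue group $H$''. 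I would also be careful to read ``extends to $\tO(\lat)$'' as the $\Q$-linear extension stabilizing $\lat$ inside $\lat_1^{\vee}\oplus\lat_2^{\vee}$; with that reading the remaining argument is pure bookkeeping.
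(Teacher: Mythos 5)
Your proof is correct, and it is essentially the same argument as the one the paper relies on: the paper gives no proof of its own but defers, via a footnote, to \cite[Prop.~14.2.6]{LecturesOnK3}, whose proof is exactly this overlattice/glue-group argument --- realize $\lat$ as an intermediate lattice $\lat_1\oplus\lat_2\subseteq\lat\subseteq\lat_1^{\vee}\oplus\lat_2^{\vee}$, identify $H=\lat/(\lat_1\oplus\lat_2)$ with the graph of the natural isomorphism $\Disc\lat_1\cong\Disc\lat_2$, and observe that the $\Q$-linear extension of $g_1\oplus g_2$ stabilizes $\lat$ iff $\overline{g}_1\oplus\overline{g}_2$ preserves $H$. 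The only cosmetic point is that your final caveat about ``reading'' extension as stabilization of $\lat$ by the $\Q$-linear extension is not an interpretive choice but automatic, since any extension to $\lat$ is determined by its restriction to the finite-index subgroup $\lat_1\oplus\lat_2$.
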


The first type of lattices that we use comes from K3 surfaces.
The middle cohomology $\HH^2(S,\Z)$ of a K3 surface $S$ (with the usual intersection pairing)
is isomorphic to the \emph{K3 lattice}
\[\ktlat :=\epart\oplus U^{\oplus 3}=\epart\oplus U_1\oplus U_2\oplus U_3.\]
We denote the standard basis of $U_i$ by $e_i,f_i$.
On the full cohomology $\HH^*(S,\Z)$ of $S$ we consider the \emph{Mukai pairing}, given by
$\bigl((x_0,x_2,x_4),(x'_0,x'_2,x'_4)\bigr)=x_2x'_2-x_0x'_4-x'_0x_4$
for $x_i,x'_i\in\HH^i(S,\Z)$. With this pairing, $\HH^*(S,\Z)$ becomes isomorphic to
the \emph{Mukai lattice}
\[\mukailat:=\ktlat\oplus U(-1)=\epart\oplus U_1\oplus U_2\oplus U_3\oplus U_4(-1).\]
As $U\cong U(-1)$, the Mukai lattice is isomorphic to $\ktlat\oplus U$.
To avoid confusion, we denote the latter by $\extktlat$, and fix an isomorphism
$\extktlat\xrightarrow{\sim}\mukailat$ by sending $f_4$ to $-f_4$.

\medskip
We fix $\ell_d = e_3+\tfrac{d}2f_3\in U_3\subset \ktlat$ and
let $\polktlat:=\ell_d^{\perp}\subset\ktlat$ be its orthogonal complement in $\ktlat$. Then
\[\polktlat\cong \epart\oplus U^{\oplus 2}\oplus\Z(-d)\]
is isomorphic to the primitive cohomology $L^{\perp}\subset \HH^2(S,\Z)$
of any polarized K3 surface $(S,L)$ of degree $d$.
We will need the following subgroup
\[\stO(\polktlat):= \{f\in \tO(\polktlat) \mid \overline{f}=\id_{\Disc\polktlat}\}\]
of $\tO(\polktlat)$,
which, by Lemma \ref{extendauto}, is isomorphic to
$\{f\in \tO(\ktlat) \mid f(\ell_d)=\ell_d\}$.

\medskip
Next, we define some lattices related to cubic fourfolds. Fix a primitive embedding of the lattice
$A_2=\langle \lambda_1,\lambda_2\rangle
=\left( \Z^{\oplus 2},\left(\begin{smallmatrix} 2 & -1 \\ -1 & 2 \end{smallmatrix}\right)\right)$
into $U_3\oplus U_4\subset\extktlat$,
for instance by $\lambda_1\mapsto e_3+f_3$ and $\lambda_2\mapsto e_4+f_4-e_3$.
This embedding is unique up to composition with elements of $\tO(\extktlat)$.
We are mostly interested in the complement $A_2^{\perp}\subset\extktlat$ of $A_2$:
\[A_2^{\perp}\cong \epart\oplus U_1\oplus U_2\oplus A_2(-1).\]

Denote by $\HH^4(X,\Z)^-$ the middle cohomology of a cubic fourfold $X$,
with the intersection product changed by a sign.
This lattice is isomorphic to
\[\cublat:=\epart\oplus U^{\oplus 2}\oplus \Z(-1)^{\oplus 3}.\]
Let $h=(1,1,1)\in \Z(-1)^{\oplus 3}\subset \cublat$.
The primitive cohomology
$\HH^4(X,\Z)^-_{\prim}\subset\HH^4(X,\Z)^-$ of $X$
is isomorphic to $\primcublat:=h^{\perp}\subset \cublat$. An easy computation shows that
\[\primcublat\cong \epart\oplus U^{\oplus 2}\oplus A_2(-1),\]
so $\primcublat\cong A_2^{\perp}\subset \extktlat$.
As for $\polktlat$, we will consider the subgroup
\[\stO(\primcublat):=\{f\in \tO(\primcublat) \mid \overline{f}=\id_{\Disc\primcublat}\}
\cong \{f\in \tO(\cublat) \mid f(h)=h\}\]
of $\tO(\primcublat)$ acting on $\primcublat$.

\section{Hassett's construction}
We summarize Hassett's construction, explaining those proofs that we need for our results.
For details, see \cite{HassettPaper}.

\subsection{Special cubic fourfolds}
As above, we denote by $\HH^4(X,\Z)^-$ the middle cohomology lattice
of a cubic fourfold $X$ with the intersection form changed by a sign.
Inside it, we consider the lattice $A(X)$ of Hodge classes:
\[A(X)=\HH^4(X,\Z)^-\cap \HH^{2,2}(X)\]
which is negative definite by the Hodge--Riemann bilinear relations.
We also fix the notation $h_X\in\HH^4(X,\Z)$ for the square of a hyperplane class on $X$.
For $X$ general, the lattice $A(X)$ has rank one and is generated by $h_X$.
We call $X$ \emph{special} if $\rk A(X)\geq 2$.
By the Hodge conjecture for cubic fourfolds \cite{ZuckerHC}, $X$ is special if and only if
$X$ contains a surface that is not homologous to a complete intersection.

\medskip
If $X$ is special, then $A(X)$ contains a primitive sublattice $K$ of rank two. Hassett proved
that fixing the discriminant of such $K$ gives divisors in the moduli space $\mC$
of smooth cubic fourfolds. Namely, define
\[\mC_d:=\{X\in\mC\mid \exists K\subset A(X),\; h_X\in K,\; \rk K=2,\; \disc K = d\}.\]
Then the set of special cubic fourfolds in $\mC$ is the union of all $\mC_d$.
\begin{theorem}\textnormal{\cite[Thm.~1.0.1]{HassettPaper}}
 The set $\mC_d$ is either empty or an irreducible divisor in $\mC$.
 It is non-empty if and only if $d$ satisfies
 \[(\ast)\htab d>6 \text{ and } d\equiv 0,2 \text{ mod }6.\]
\end{theorem}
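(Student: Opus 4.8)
The plan is to translate the defining condition of $\mC_d$ into a statement about periods and then reduce everything to lattice theory. By the global Torelli theorem for cubic fourfolds, the period map $\mathcal{P}$ realizes $\mC$ as an open subset of a quotient $\Gamma\backslash\dom$, where $\dom$ is the $20$-dimensional period domain attached to $\primcublat$ of signature $(2,20)$ and $\Gamma\subseteq\tO(\primcublat)$ is the image of the monodromy representation, closely related to $\stO(\primcublat)$. By the Hodge conjecture statement recalled above, $A(X)=\HH^{2,2}(X)\cap\HH^4(X,\Z)^-$, so the period point $[\omega_X]\in\dom$ determines $A(X)$ as the integral classes orthogonal to $\omega_X$. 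First I would record that a labelling $h_X\in K\subset A(X)$ with $\rk K=2$ and $\disc K=d$ is the same datum as a primitive vector $v$ spanning $K\cap\primcublat$, and that, fixing $h_X^2=3$, the number $d$ is read off the Gram matrix $\left(\begin{smallmatrix} 3 & a\\ a & b\end{smallmatrix}\right)$ as $d=3b-a^2$. Hence $X\in\mC_d$ if and only if $(\omega_X,v)=0$ for some such $v$, i.e.\ $[\omega_X]$ lies on the Heegner-type divisor $\dom_d:=\bigcup_v(v^{\perp}\cap\dom)$.

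Next I would prove irreducibility. Each hyperplane section $v^{\perp}\cap\dom$ has codimension one, and $\dom_d$ is $\Gamma$-invariant, so its image in $\Gamma\backslash\dom$ is an algebraic divisor. The real content is that this image is \emph{irreducible}, for which it suffices that $\Gamma$ act transitively (up to sign) on the primitive vectors $v\in\primcublat$ producing a given $d$. Since $\primcublat\cong\epart\oplus U^{\oplus 2}\oplus A_2(-1)$ contains two hyperbolic planes, I would invoke Eichler's criterion: the orbit of a primitive vector under the stable orthogonal group is determined by its square together with its image in $\Disc\primcublat\cong\Z/3$, and one checks $\Gamma$ is large enough to realize the relevant orbit. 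Intersecting the irreducible divisor $\dom_d$ with the open locus $\mC\subset\Gamma\backslash\dom$ then shows $\mC_d$ is empty or an irreducible divisor.

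Finally, the criterion $(\ast)$ splits into two independent constraints. The arithmetic constraint is that a positive definite rank-two lattice $K$ with distinguished vector $h$, $h^2=3$, and $\disc K=d$ must embed primitively into $\cublat$. Reducing $d=3b-a^2$ modulo $3$ forces $d\equiv 0,2\bmod 3$, and analysing the isomorphism class of $K$ together with Nikulin's embedding criterion relative to the odd lattice $\cublat$ refines this to $d\equiv 0,2\bmod 6$. The geometric constraint is that $\dom_d$ must genuinely meet the image of $\mathcal{P}$ rather than lie in the locus of periods not coming from a smooth cubic; this is where $d>6$ enters, the values $d=2$ and $d=6$ corresponding to degenerate configurations (a nodal cubic, respectively a cubic containing a plane). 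I would obtain the ``if'' direction by exhibiting, for each admissible $d$, an explicit smooth cubic carrying the prescribed labelling whose period is general on $\dom_d$, and the ``only if'' direction from the mod-$6$ computation together with ruling out $d\le 6$.

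The step I expect to be the main obstacle is non-emptiness for all admissible $d$. The lattice-theoretic existence of $K$ does not by itself produce a smooth cubic fourfold, so one must separate $\dom_d$ from the non-cubic locus and actually construct smooth representatives realizing each $d>6$ with $d\equiv 0,2\bmod 6$ (verifying the genericity of their periods). This is the genuinely geometric input, and it is harder precisely because one does not yet have at hand a clean description of the image of the period map.
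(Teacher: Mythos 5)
The paper does not prove this theorem itself --- it cites \cite{HassettPaper} --- so your proposal has to be measured against Hassett's argument, parts of which the paper does record (Prop.~3.2.4 for the transitivity underlying irreducibility, and the Remark citing Laza and Looijenga for the image of the period map). Your architecture --- reformulate via periods, irreducibility from monodromy transitivity on labellings of fixed discriminant, congruence from lattice arithmetic, and $d>6$ from excluding divisors missed by the period map --- is indeed the right one, and the irreducibility step matches Hassett's. But two steps have genuine holes and one claim is factually wrong. First, your congruence argument only yields $d\equiv 0,2 \bmod 3$. The missing factor of $2$ is not supplied by ``Nikulin's embedding criterion''; it comes from the fact that the primitive cohomology of a cubic fourfold is an \emph{even} lattice (equivalently, $h_X$ is a characteristic vector of the odd unimodular lattice $\HH^4(X,\Z)$): for $K=\langle h_X,T\rangle$ with Gram matrix $\left(\begin{smallmatrix} 3 & a\\ a & b\end{smallmatrix}\right)$, the vector $3T-ah_X$ lies in $h_X^{\perp}$ and has square $3d=3(3b-a^2)$, which must therefore be even, forcing $2\mid d$. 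Without this input, $d\equiv 0,2\bmod 6$ is simply not proved. Second, your identification of the excluded loci is wrong: cubic fourfolds containing a plane form $\mC_8$, which is a \emph{non-empty} divisor; the divisor $\mCbar_6$ corresponds to nodal (singular) cubics, and $\mCbar_2$ to the secant variety of the Veronese surface (symmetric determinantal cubics). This is not cosmetic, because knowing exactly which Hodge-theoretic divisors contain no periods of smooth cubics is the entire content of the $d>6$ condition.

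The main gap is the one you flag yourself: non-emptiness for every admissible $d$. Lattice-theoretic existence of $K_d\subset\cublat$ gives $\dom(\kdperp)\neq\emptyset$, but you must then show that the irreducible divisor $\mCbar_d$ is not contained in the complement of the image of $\mC\hookrightarrow\qdom(\primcublat)$, and your proposal offers no mechanism for this. The clean modern closure is the theorem of Laza and Looijenga, recorded in the paper's Remark: that complement is exactly $\mCbar_2\cup\mCbar_6$. Granting it, take a very general $x\in\dom(\kdperp)$; its algebraic lattice $\{v\in\cublat\mid (v,x)=0\}$ equals $K_d$, and any labelling of discriminant $2$ or $6$ at $x$ would be a rank-two primitive sublattice of $K_d$ containing $h$, hence $K_d$ itself --- impossible once $d>6$. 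So $\mCbar_d\not\subset\mCbar_2\cup\mCbar_6$ and $\mC_d=\mC\cap\mCbar_d\neq\emptyset$. Hassett's original proof predates Laza--Looijenga and instead exhibits, for each admissible $d$, explicit smooth cubics containing suitable surfaces; either way a substantive input is needed exactly where your sketch stops. As it stands, you have (modulo the parity point above) the ``only if'' direction and irreducibility, but not the ``if'' direction.
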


\subsection{Periods of special cubic fourfolds}
Recall the definition of the period domain for a lattice
$\lat$ of signature $(n_+,n_-)$ with $n_+\geq 2$:
\[\dom(\lat)=\{x\in\P(\lat\otimes\C)\mid (x)^2=0,\: (x,\bar{x})>0\}.\]
This is a complex manifold of dimension $\rk\lat -2$, which is connected when $n_+>2$
and has two connected components when $n_+=2$. In the second case,
the components are interchanged by complex conjugation.

\medskip
The period domain for polarized K3 surfaces of degree $d$ is $\dom(\polktlat)$,
which is a 19-dimensional manifold with two connected components.
By Baily--Borel \cite{BB}, its quotient ${\qdom(\polktlat):=\stO(\polktlat)\backslash\dom(\polktlat)}$
by the group $\stO(\polktlat)$ is a connected, quasi-projective variety of dimension 19.
Furthermore, by the global Torelli theorem for K3 surfaces \cite{PjaSha},
the period map induces an open embedding
$\mM_d\hookrightarrow \qdom(\polktlat)$
(this map is algebraic by Borel's extension theorem \cite{Borel},
see e.g.\ \cite[Rem.~6.4.2]{LecturesOnK3}).

\medskip
The period domain for cubic fourfolds is $\dom(\primcublat)$,
a 20-dimensional manifold with again two connected components.
By Baily--Borel, the quotient
$\qdom(\primcublat):= \stO(\primcublat)\backslash \dom(\primcublat)$
is a connected quasi-projective variety of dimension 20.
By the global Torelli theorem for cubic fourfolds \cite{VoisinTorelli},
the period map gives an open embedding
$\mC\hookrightarrow \qdom(\primcublat)$
(algebraicity again follows from \cite{Borel}, see \cite[Prop.~2.2.2]{HassettPaper}).

\medskip
Inside $\dom(\primcublat)$ we can identify those periods coming from special cubic fourfolds.
Note that a cubic fourfold $X$ is special if and only if there exists
a negative definite sublattice ${K\subset \HH^4(X,\Z)^-}$ of rank two with $h_X\in K$, such that
$K\otimes\C$ is contained in ${\HH^{3,1}(X)^{\perp}\subset\HH^4(X,\C)^-}$.
On the level of the period domain, this means the following:
After choosing a marking
$\HH^4(X,\Z)^-_{\prim}\xrightarrow{\sim}\primcublat$,
the period of $X$ lands in
\[\{x\in \dom(\primcublat)\mid (K\cap \primcublat)_{\C}\subset x^{\perp}\}\]
for some primitive, negative definite sublattice $K\subset \cublat$ of rank two containing $h$.
Let $K^{\perp}\subset \primcublat$ be its orthogonal complement,
then the set above is equal to the divisor ${\dom(K^{\perp})\subset\dom(\primcublat)}$.

\medskip
We fix one sublattice $K_d\subset\cublat$ as above, with discriminant $d$.
Let $\mCbar_d\subset \qdom(\primcublat)$ be the image of $\dom(\kdperp)\subset\dom(\primcublat)$
under the quotient map $\dom(\primcublat)\to \qdom(\primcublat)$.
The following shows that $\mCbar_d$ does not depend on the choice of $K_d$.

\begin{proposition}[{\cite[Prop.~3.2.4]{HassettPaper}}]
 Let $K,K'\subset \cublat$ be primitive sublattices of rank two containing $h$.
 Then $K=f(K')$ for some $f\in \stO(\primcublat)$ if and only if $\disc K=\disc K'$.
\end{proposition}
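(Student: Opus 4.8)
My plan is to treat the two directions separately. The ``only if'' direction is immediate: under the identification $\stO(\primcublat)\cong\{f\in\tO(\cublat)\mid f(h)=h\}$, an element $f$ with $K=f(K')$ restricts to an isometry $K'\to K$, so $\disc K=\disc K'$. For the converse I would set $d=\disc K=\disc K'$ and proceed in two steps: first build an abstract isometry $g\colon K'\to K$ fixing $h$, then extend $g$ to an isometry of the whole lattice $\cublat$ (which is unimodular, since $\Z(-1)$ is).

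For the first step I would classify the pair $(K,h)$ up to isometries fixing $h$. As $h$ is primitive in $K$ with $h^2=-3$, I can complete it to a basis $\{h,w\}$ with Gram matrix $\left(\begin{smallmatrix}-3 & b\\ b & c\end{smallmatrix}\right)$ and $d=-3c-b^2$. Replacing $w$ by $w+kh$ or by $-w$ fixes $h$ and shows that $b$ is well defined modulo $3$ up to sign; since $d\equiv-b^2\mod 3$, only the cases $3\mid b$ and $3\nmid b$ occur, giving the two normal forms $\langle-3\rangle\oplus\langle-d/3\rangle$ and $\left(\begin{smallmatrix}-3 & 1\\ 1 & -(d+1)/3\end{smallmatrix}\right)$. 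I expect this to show that $(K,h)$ is determined up to isometry fixing $h$ by $d$ alone, and likewise for $(K',h)$; composing $K'$ and $K$ through their common normal form then yields the desired $g$ with $g(h)=h$.

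The substantive step is extending $g$ across the orthogonal complements. I would set $M=K^{\perp}$ and $M'=(K')^{\perp}$, indefinite lattices of rank $21$ and signature $(2,19)$, and use unimodularity of $\cublat$ to get $\Disc M\cong\Disc K$ and $\Disc M'\cong\Disc K'$ with opposite forms. Since $K\cong K'$, the lattices $M$ and $M'$ have equal signature and isometric discriminant forms, hence lie in the same genus; as $M$ is indefinite of rank $21$, far exceeding the number of generators of $\Disc M$, I expect this genus to contain a single class, so $M\cong M'$. Choosing any isometry $g_M\colon M'\to M$, Lemma \ref{extendauto} tells me that $g\oplus g_M$ extends to $\tO(\cublat)$ exactly when $\overline{g}$ and $\overline{g_M}$ correspond under the identifications $\Disc K'\cong\Disc M'$ and $\Disc K\cong\Disc M$. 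Because $\tO(M)\to\tO(\Disc M)$ is surjective for such $M$, I can modify $g_M$ to realize the required discriminant automorphism, producing $f\in\tO(\cublat)$ with $f|_{K'}=g$; then $f(h)=h$ and $f(K')=K$, so $f$ gives the element of $\stO(\primcublat)$ we want.

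The main obstacle is this last step. It rests on Nikulin's results that an indefinite lattice whose rank exceeds the length of its discriminant group is unique in its genus and has $\tO$ surjecting onto $\tO(\Disc)$; both hold comfortably for $M$, but I would need the versions valid for odd lattices, since $\cublat$ and its sublattices here are odd. The remainder of the argument, in particular the normal-form computation of Step 1, is elementary.
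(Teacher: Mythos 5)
Your ``only if'' direction and your Step 1 are correct: the normal forms $\langle-3\rangle\oplus\langle-d/3\rangle$ for $3\mid d$ and $\left(\begin{smallmatrix}-3&1\\1&-(d+1)/3\end{smallmatrix}\right)$ for $d\equiv 2\bmod 3$ are exactly Hassett's classification of the pair $(K,h)$ \cite[Prop.~3.2.2]{HassettPaper}, and they produce an isometry $g\colon K'\to K$ with $g(h)=h$. The genuine gap is in Step 2, and it sits at the prime $2$. Because $\cublat$ is an \emph{odd} unimodular lattice, primitivity of $K$ gives an identification $\Disc K\cong\Disc M$ matching only the $\Q/\Z$-valued bilinear forms on the discriminant groups; nothing is controlled modulo $2\Z$. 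On the other hand, $M=K^{\perp}$ and $M'=(K')^{\perp}$ are \emph{even} lattices---they lie in $h^{\perp}=\primcublat$, which is even---and for even lattices the genus is determined by the signature together with the $\Q/2\Z$-valued discriminant \emph{quadratic} form, not by the $\Q/\Z$-valued bilinear form. On a discriminant group of even order (and $d$ is even for every nonempty $\mC_d$), a given bilinear form has several inequivalent quadratic refinements, which form a torsor under $\operatorname{Hom}(\Disc M,\Z/2\Z)$; so ``equal signature and isometric discriminant forms, hence same genus'' does not follow from what you have established. The same defect breaks your last step: Nikulin's theorem makes $\tO(M)$ surject onto the isometries of the quadratic form $q_M$, whereas the automorphism of $\Disc M$ you must realize is a priori only an isometry of the bilinear form (on $\Z/8\Z$, for instance, $x\mapsto 3x$ preserves the bilinear form but no quadratic refinement of it). Note also that your closing diagnosis is misplaced: it is not $M$ that needs odd-lattice versions of Nikulin's theorems ($M$ is even); it is the odd ambient lattice $\cublat$ that fails to transport $\Q/2\Z$-data from $K$ to its complement.

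The paper itself does not reprove the statement (it cites Hassett), but the tools it sets up give the clean argument, which repairs your proof by avoiding the complement altogether. Under the identification $\stO(\primcublat)\cong\{f\in\tO(\cublat)\mid f(h)=h\}$, a primitive rank-two sublattice $h\in K\subset\cublat$ is equivalent data to the primitive generator $v$ of $K\cap\primcublat$, up to sign, since $K$ is the saturation of $\Z h+\Z v$. Your normal forms translate into the statement that $\disc K$ determines, and is determined by, the pair $\bigl((v)^2,\pm\overline{v}\bigr)$ with $\overline{v}\in\Disc\primcublat\cong\Z/3\Z$: explicitly $(v)^2=-d/3$ and $\overline{v}=0$ when $3\mid d$, while $(v)^2=-3d$ and $\overline{v}\neq 0$ when $d\equiv 2\bmod 3$. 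Eichler's criterion \cite[Prop.~3.3]{GrHuSa} for the even lattice $\primcublat\supset U^{\oplus 2}$---the paper's Lemma~\ref{congruence} is the special case of trivial discriminant class---says that the $\stO(\primcublat)$-orbit of a primitive vector is determined by exactly this pair. Hence, after possibly replacing $v'$ by $-v'$, there is $f\in\stO(\primcublat)$ with $f(v')=v$, and then $f(K')=K$. This one-step argument is what the paper's toolkit (Lemmas~\ref{extendauto} and~\ref{congruence}, compare the proof of Proposition~\ref{indextwo}) is designed for, and it sidesteps all $2$-adic subtleties because every lattice in sight is even.
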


Note that the immersion $\mC\hookrightarrow\qdom(\primcublat)$
maps $\mC_d$ into $\mCbar_d$. In fact, we have $\mC_d=\mC\cap\mCbar_d$.

\subsection{Associated K3 surfaces}
Consider the following condition on $d\in\N$:
\[\text{\twostarsp}\htab \text{$d$ is even and not divisible by 4, 9,
or any odd prime $p\equiv 2$ mod 3}.\]
This implies that $d\equiv 0,2\mod 6$.
Hassett proved the following statement:

\begin{proposition}[{\cite[Prop.~5.1.4]{HassettPaper}}]
 The number $d$ satisfies \twostarsp  if and only if there is an isomorphism
$\polktlat\cong\kdperp$.
\end{proposition}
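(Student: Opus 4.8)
The plan is to reduce the existence of an isometry $\polktlat\cong\kdperp$ to an isomorphism of discriminant quadratic forms, and then to translate the latter into the arithmetic condition \twostar. Both $\polktlat$ and $\kdperp$ are even lattices of signature $(2,19)$ and rank $21$, and their discriminant groups require at most two generators. By the uniqueness theorem for indefinite lattices of sufficiently large rank (see \cite[Ch.~14]{LecturesOnK3}), such a lattice is determined up to isometry by its signature together with its discriminant form; since the two signatures already coincide, I would first reduce to
\[\polktlat\cong\kdperp\quad\Longleftrightarrow\quad \Disc\polktlat\cong\Disc\kdperp.\]

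For the left-hand side the form is read off directly from $\polktlat\cong\epart\oplus U^{\oplus 2}\oplus\Z(-d)$: the discriminant group is cyclic of order $d$, generated by a class of square $-1/d \bmod 2\Z$. The work is therefore to compute $\Disc\kdperp$. Here I would exploit the identification $\primcublat\cong A_2^\perp\subset\extktlat$ together with the fact that $\extktlat$ is even and unimodular. Let $w$ generate the rank-one lattice $K_d\cap\primcublat$, so that $\kdperp=w^\perp$ inside $\primcublat$. Since $\primcublat=A_2^\perp$, we have $\kdperp=(A_2\oplus\Z w)^\perp$ in $\extktlat$, whence its orthogonal complement is the saturation $N$ of $A_2\oplus\Z w$, a lattice of rank three, and unimodularity of $\extktlat$ yields an anti-isometry $\Disc\kdperp\cong\Disc N$. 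The value of $w^2$, and whether $A_2\oplus\Z w$ is already saturated, depend on $d\bmod 6$, so I would split into the cases $d\equiv 2$ and $d\equiv 0\bmod 6$ and insert Hassett's explicit Gram matrix for $K_d$ \cite{HassettPaper}. A direct computation then shows that $\Disc\kdperp$ is again cyclic of order $d$, generated by a class of square $3/d$ when $d\equiv 2\bmod 6$, and of square $(9-2d)/(3d)$ when $d\equiv 0\bmod 6$.

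Comparing the two forms is then elementary: two nondegenerate forms $x\mapsto ax^2/d$ and $x\mapsto a'x^2/d$ on $\Z/d$ are isomorphic if and only if $a'\equiv at^2\bmod 2d$ for some $t$ coprime to $d$. In the case $d\equiv 2\bmod 6$ (where $3\nmid d$) this becomes $-1\equiv 3t^2\bmod 2d$, which after multiplying through by $3$ is equivalent to $-3$ being a square modulo $2d$; the case $d\equiv 0\bmod 6$ leads to the analogous congruence $(2d-9)/3\equiv t^2\bmod 2d$, with the $3$-part of the discriminant group contributing separately. I would finish by a local analysis using the Chinese remainder theorem and Hensel's lemma: the $2$-adic condition is solvable exactly when $4\nmid d$, the $3$-adic condition exactly when $9\nmid d$, and for an odd prime $p\neq 3$ dividing $d$ the condition is that $-3$ be a quadratic residue modulo $p$, which holds precisely when $p\equiv 1\bmod 3$. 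Assembling these local conditions reproduces exactly \twostar.

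The main obstacle is the computation of $\Disc\kdperp$. One must keep track of the quadratic refinement valued in $\Q/2\Z$, rather than only the $\Q/\Z$-valued bilinear form, because it is precisely the $2$-adic part of the quadratic form that encodes the constraint $4\nmid d$; and in the case $d\equiv 2\bmod 6$ one must correctly identify the isotropic glue class of order three between $A_2$ and $\Z w$ inside $\extktlat$ and verify that the quadratic form descends to the saturation $N$. Once $\Disc\kdperp$ is pinned down, the remaining arithmetic is routine.
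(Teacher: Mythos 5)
First, a point of reference: the paper you are working against does not prove this proposition at all --- it is quoted verbatim from Hassett, so the only proof to compare with is Hassett's original one. Your architecture coincides with his: both lattices are even, indefinite, of signature $(2,19)$ with discriminant groups of length at most two, so by Nikulin's uniqueness theorem they are isomorphic if and only if their discriminant \emph{forms} (not just groups) are isomorphic, and the problem becomes computing $q_{\kdperp}$ and doing local arithmetic. Where you genuinely differ is in how you compute $q_{\kdperp}$: Hassett works with explicit dual vectors inside the odd unimodular lattice $\HH^4(X,\Z)$, whereas you transport $v_d$ into the even unimodular Mukai lattice via $\primcublat\cong A_2^\perp\subset\extktlat$ and read off $q_{\kdperp}=-q_N$ from the saturation $N$ of $A_2\oplus\Z w$. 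This is cleaner (no headaches with $\Q/2\Z$-refinements on odd lattices, which you rightly flag) and fits the setup of this paper, where the embedding $A_2\subset\extktlat$ is already in place. Your claimed outputs are in fact correct: for $d\equiv 0\bmod 6$ one has $w^2=-d/3$, the sum $A_2\oplus\Z w$ is saturated, and the generator square is $(9-2d)/(3d)$; for $d\equiv 2\bmod 6$ one has $w^2=-3d$, an index-three isotropic glue, and generator square $3/d$.

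There are, however, two intermediate claims that are false as written, both at the prime $3$. First, for $d\equiv 0\bmod 6$ the group $\Disc\kdperp\cong\Z/3\Z\oplus\Z/(d/3)\Z$ is cyclic only when $9\nmid d$; when $9\mid d$ its non-cyclicity is precisely the obstruction, so you cannot assert cyclicity before splitting off that case. Second, and more seriously, the $3$-adic condition is \emph{not} ``solvable exactly when $9\nmid d$''. On $\Z/3\Z$ there are two non-isomorphic nondegenerate forms, $\langle 2/3\rangle$ and $\langle -2/3\rangle$. The $3$-part of $q_{\kdperp}$ is $-q_{A_2}=\langle -2/3\rangle$, while the $3$-part of $q_{\polktlat}=\langle -1/d\rangle$ is generated by a class of square $-(d/3)/3 \bmod 2\Z$, which (for $d/3$ even) equals $\langle -2/3\rangle$ if $d/3\equiv 2\bmod 3$ and $\langle 2/3\rangle$ if $d/3\equiv 1\bmod 3$. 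So the correct local condition at $3$ is ``$9\nmid d$ \emph{and} $d/3\equiv 2\bmod 3$''; for instance $d=30$ satisfies $9\nmid d$ but fails the second half. Your final assembly nevertheless yields the right answer, because $d/3\equiv 2\bmod 3$ is implied by the conditions at the other primes: if $v_2(d)=1$, $v_3(d)=1$, and every other prime divisor of $d$ is $\equiv 1\bmod 3$, then $d/3\equiv 2\cdot 1\equiv 2\bmod 3$ automatically (this is why $d=30$ causes no contradiction --- it already fails at $5$). But as written, your equivalence is checked against the wrong list of local conditions; to close the argument you must state the correct condition at $3$ and add the observation that its extra clause is forced by the conditions at $2$ and at the primes $p\neq 3$.
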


So when $d$ satisfies \twostar, there is an isomorphism of period domains
$\dom(\polktlat)\cong\dom(\kdperp)$.
Under the identification $\polktlat\cong\kdperp$, the group
$\stO(\polktlat)$ forms a subgroup of $\stO(\primcublat)$,
see Proposition~\ref{identifyOrth} below, so we also get a surjective map
$\qdom(\polktlat) = \tO(\polktlat)\backslash\dom(\polktlat)\to \mCbar_d$.
This gives us the following commutative diagram:
  \[\xymatrix{\dom(\polktlat) \ar@{->>}[d] \ar[r]^{\sim} & \dom(\kdperp)\, \ar@{->>}[d] \ar@{^{(}->}[r]
    & \dom(\primcublat) \ar@{->>}[d] \\
  \qdom(\polktlat) \ar@{->>}[r] & \mCbar_d \, \ar@{^{(}->}[r] & \qdom(\primcublat) \\
  \mM_d \ar@{}[u]^(.18){}="a"^(.88){}="b" \ar@{^{(}->} "a";"b" \ar@{-->}[r]^{\varphi}
    & \mC_d \, \ar@{^{(}->}[r] \ar@{}[u]^(.18){}="a"^(.88){}="b" \ar@{^{(}->} "a";"b"
    & \mC \ar@{}[u]^(.18){}="a"^(.88){}="b" \ar@{^{(}->} "a";"b"
  }\]

It can be shown that the rational map $\varphi\colon\mM_d\dashrightarrow \mC_d$
is regular on an open subset which maps surjectively to $\mC_d$,
see \cite[p.~14]{HassettPaper}.
Note that $\varphi$ depends on the choice of an isomorphism $\polktlat\cong\kdperp$,
thus it is only unique up to $\tO(\polktlat)/\stO(\polktlat)$.

\medskip
If $\varphi$ sends $(S,L)\in\mM_d$ to $X$
then there exists, up to a Tate twist, an isometry of Hodge structures
\[\HH^4(X,\Z)^-\supset K^{\perp}\cong L^{\perp}\subset\HH^2(S,\Z)\]
for some primitive sublattice $K\subset A(X)$ of rank two and discriminant $d$ containing $h_X$.
Conversely, if such a Hodge isometry exists, this induces a lattice isomorphism
\[\polktlat\cong L^{\perp}\cong K^{\perp}\cong\kdperp\]
such that the induced map $\varphi\colon\mM_d\dashrightarrow \mC_d$
sends $(S,L)$ to $X$. This motivates the following definition.

\begin{definition}
Let $X\in\mC_d$.
A polarized K3 surface $(S,L)\in\mM_d$ is \emph{associated} to $X$ if there exists a
Hodge isometry
\[\HH^4(X,\Z)^-(1)\supset K^{\perp}\cong L^{\perp}\subset\HH^2(S,\Z)\]
for some primitive sublattice $K\subset A(X)$ of rank two and discriminant $d$ containing $h_X$.
\end{definition}
For the rest of this section, we fix one choice of the rational map $\varphi$.
We suppress the Tate twist and view $\HH^4(X,\Z)^-$ as a Hodge structure of weight two.

\begin{remark}
The complement of the image of the inclusion $\mC\hookrightarrow\qdom(\primcublat)$ is
$\mCbar_2\cup\mCbar_6$, see \cite{Laza,Looij}.
Therefore, $\varphi$ is defined on $(S,L)\in\mM_d$
if and only if its image under ${\qdom(\polktlat)\to\mCbar_d}$
is contained in $\mCbar_d\backslash \left(\mCbar_2\cup\mCbar_6\right)$.
In particular, this holds when $\rho(S)=1$.
\end{remark}

To describe the map $\qdom(\polktlat)\to \mCbar_d$, we define two subgroups of $\stO(\primcublat)$.
First, consider the group of isomorphisms of $\cublat$ that are the identity on $K_d$.
Elements of $\stO(\primcublat)$ are in this group if and only if they fix a generator
$v_d$ of $K_d\cap \primcublat$, which is unique up to a sign.
We denote the group by $\vdgroup$:
\begin{align*}
 \vdgroup&=\{f\in\tO(\cublat)\mid f|_{K_d}=\id_{K_d}\}\\
 &=\{f\in\stO(\primcublat)\mid f(v_d)=v_d\}.
\end{align*}

The next statement is part of \cite[Thm.~5.2.2]{HassettPaper}.
It follows directly from Lemma~\ref{extendauto}.
\begin{proposition}\label{identifyOrth}
Suppose that $d$ satisfies \twostar. Under the isomorphism
$\polktlat\cong\kdperp$, the group $\stO(\polktlat)$ is identified with $\vdgroup$.
\end{proposition}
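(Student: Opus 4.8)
The plan is to realize both $\stO(\polktlat)$ and $\vdgroup$ as copies of the analogously defined subgroup $\stO(\kdperp)=\{g\in\tO(\kdperp)\mid\overline{g}=\id_{\Disc\kdperp}\}$, exploiting that $\cublat$ (unlike $\primcublat$) is unimodular, so that Lemma~\ref{extendauto} applies to the orthogonal primitive sublattices $K_d,\kdperp\subset\cublat$. First note that $\stO(\polktlat)$ and $\stO(\kdperp)$ are defined purely in terms of the lattice together with the induced action on its discriminant group; hence the fixed isomorphism $\polktlat\cong\kdperp$ automatically induces a group isomorphism $\stO(\polktlat)\cong\stO(\kdperp)$. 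It therefore suffices to identify $\vdgroup$ with $\stO(\kdperp)$, and since $\vdgroup\subseteq\stO(\primcublat)$ by definition, this simultaneously exhibits $\stO(\polktlat)$ as the subgroup $\vdgroup$ of $\stO(\primcublat)$.

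Next I would consider the restriction homomorphism $r\colon\vdgroup\to\tO(\kdperp)$, $f\mapsto f|_{\kdperp}$. This is well-defined because every $f\in\vdgroup$ fixes $K_d$ pointwise and hence preserves $\kdperp=K_d^{\perp}$, and it is a homomorphism as all such $f$ preserve $\kdperp$. Injectivity is immediate: if $f|_{\kdperp}=\id$, then $f$ is the identity on the finite-index sublattice $K_d\oplus\kdperp\subset\cublat$, hence on all of $\cublat$.

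The heart of the argument is to compute the image of $r$ by applying Lemma~\ref{extendauto} to the unimodular lattice $\cublat$. For $f\in\vdgroup$, the isometry $f$ is the (necessarily unique) extension of $\id_{K_d}\oplus f|_{\kdperp}$ to $\cublat$; the ``only if'' direction of the lemma then forces $\overline{\id_{K_d}}=\overline{f|_{\kdperp}}$ under $\Disc K_d\cong\Disc\kdperp$, and since $\overline{\id_{K_d}}=\id$ we conclude $f|_{\kdperp}\in\stO(\kdperp)$. Conversely, given $g\in\stO(\kdperp)$ we have $\overline{g}=\id=\overline{\id_{K_d}}$, so by the ``if'' direction $\id_{K_d}\oplus g$ extends to some $f\in\tO(\cublat)$; this $f$ fixes $K_d$, hence lies in $\vdgroup$, and $r(f)=g$. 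Thus $r$ is an isomorphism onto $\stO(\kdperp)$, and composing with $\stO(\kdperp)\cong\stO(\polktlat)$ gives the claimed identification. The only real care needed is the bookkeeping with the canonical identification $\Disc K_d\cong\Disc\kdperp$ and, crucially, invoking unimodularity of $\cublat$ rather than $\primcublat$, which is precisely what makes Lemma~\ref{extendauto} available.
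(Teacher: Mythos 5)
Your proof is correct and takes essentially the same approach as the paper: the paper's proof is simply the remark that the statement is part of Hassett's Thm.~5.2.2 and ``follows directly from Lemma~\ref{extendauto},'' and your argument is precisely that lemma spelled out—applied to the primitive sublattices $K_d$ and $\kdperp$ of the unimodular lattice $\cublat$ to identify $\vdgroup$ (via restriction) with $\stO(\kdperp)\cong\stO(\polktlat)$. The injectivity/surjectivity bookkeeping you supply is exactly the content left implicit in that remark.
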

In particular, there is an isomorphism
$\qdom(\polktlat)\cong \vdgroup \backslash \dom(\kdperp)$.

\medskip

The second group we consider consists of the elements $f\in\stO(\primcublat)$ that preserve $K_d$.
These are exactly the ones satisfying $f(v_d)\in\{v_d,-v_d\}$.
We therefore denote this group by $\pmvdgroup$:
\begin{align*}
\pmvdgroup&=\{f\in\tO(\cublat)\mid f(h)=h\text{ and }f(K_d)=K_d\}\\
&= \{f\in \stO(\primcublat)\mid f(v_d) = \pm v_d\}.
\end{align*}

Again by Baily--Borel, the quotient $\pmvdgroup\backslash\dom(\kdperp)$ is a normal
quasi-projective variety. In fact, the map
$\pmvdgroup\backslash\dom(\kdperp)\twoheadrightarrow\mCbar_d$
is the normalization of $\mCbar_d$.\footnote{
A generic special cubic fourfold $X$ satisfies $\rk A(X)=2$ \cite[Sec.~5.1]{Zarhin},
so there is only one sublattice $K_d\subset A(X)$. It follows that the map
$\pmvdgroup\backslash\dom(\kdperp)\twoheadrightarrow\mCbar_d$
is generically injective.
To see that it is proper, note that the action of $\stO(\primcublat)$ on $\dom(\primcublat)$ is
properly discontinuous \cite[Rem.~6.1.10]{LecturesOnK3}.
Hence the map $\dom(\primcublat)\to \qdom(\primcublat)$
is closed, as is its restriction $\dom(\kdperp)\twoheadrightarrow\mCbar_d$ to the
closed subset $\dom(\kdperp)\subset \dom(\primcublat)$.
Since this factors as
\[\dom(\kdperp)\twoheadrightarrow \pmvdgroup\backslash\dom(\kdperp)\twoheadrightarrow\mCbar_d,\]
the map $\pmvdgroup\backslash\dom(\kdperp)\twoheadrightarrow\mCbar_d $
is closed as well.
Moreover, it has finite fibres, so it is proper.
}
Summarizing, we have the following commutative diagram:
  \[\xymatrix@C=0em{\qdom(\polktlat) & \cong \vdgroup\backslash\dom(\kdperp)
    \overset{\overline{\gamma}}{\longrightarrow}\pmvdgroup\backslash\dom(\kdperp)\cong
    & \mCbar_d^{\norm}\ar@{->>}[rrr] & & & \mCbar_d \\
  \mM_d\ar@{}[u]^(.18){}="a"^(.88){}="b" \ar@{^{(}->} "a";"b"
     \ar@{-->}[rr]^{\gamma} & & \mC_d^{\norm}
    \ar@{->>}[rrr] & & & \mC_d \ar@{}[u]^(.18){}="a"^(.88){}="b" \ar@{^{(}->} "a";"b"
  }\]

The spaces $\vdgroup\backslash\dom(\kdperp)$ and
$\pmvdgroup\backslash\dom(\kdperp)$
can be seen as period domains of \emph{marked} and \emph{labelled} cubic fourfolds,
respectively, see \cite[Sec.~3.1,~5.2]{HassettPaper}.

\medskip
The following describes the generic fibre of the quotient map
\[\overline{\gamma}\colon\vdgroup\backslash\dom(\kdperp)\twoheadrightarrow
\pmvdgroup\backslash\dom(\kdperp).\]

\begin{proposition}[{\cite[Prop.~5.2.1]{HassettPaper}}]\label{indextwo}
There is an isomorphism
\[\pmvdgroup/ \vdgroup\cong
\begin{cases}
    \{0\}       &  \text{if } d\equiv 2\mod 6\\
    \Z/2\Z  &  \text{if } d\equiv 0\mod 6.
  \end{cases}\]
\end{proposition}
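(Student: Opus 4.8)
The plan is to reduce the statement to the existence of a single isometry and then to settle that existence by lattice theory. First I would introduce the sign character $\varepsilon\colon\pmvdgroup\to\{\pm1\}$ defined by $f(v_d)=\varepsilon(f)\,v_d$. This is well defined: any $f\in\pmvdgroup$ fixes $h$ and preserves $K_d$, hence preserves the rank-one lattice $K_d\cap h^{\perp}=\Z v_d$, so $f(v_d)=\pm v_d$. Clearly $\varepsilon$ is a homomorphism with kernel exactly $\vdgroup$, so $\pmvdgroup/\vdgroup\hookrightarrow\Z/2\Z$. Thus the quotient is either trivial or $\Z/2\Z$, and it equals $\Z/2\Z$ precisely when there exists $f\in\pmvdgroup$ with $f(v_d)=-v_d$. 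The whole proposition therefore comes down to deciding, for each residue of $d$ modulo $6$, whether such an $f$ exists.

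To analyze this I would use the orthogonal decomposition $\cublat=K_d\oplus\kdperp$ into complementary primitive sublattices of the \emph{unimodular} lattice $\cublat$. Any $f$ with $f(K_d)=K_d$ splits as $f=\sigma\oplus\tau$ with $\sigma\in\tO(K_d)$ and $\tau\in\tO(\kdperp)$, and by Lemma~\ref{extendauto} such a pair glues to an isometry of $\cublat$ if and only if $\overline{\sigma}=\overline{\tau}$ under the identification $\Disc K_d\cong\Disc\kdperp$. Since a desired $f$ must fix $h$ and send $v_d\mapsto-v_d$, its restriction $\sigma$ is forced to be the involution $\sigma_0$ of $K_d\otimes\Q$ with $\sigma_0(h)=h$ and $\sigma_0(v_d)=-v_d$. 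Hence a suitable $f$ exists if and only if (i) $\sigma_0$ preserves the lattice $K_d$, and (ii) the induced $\overline{\sigma_0}\in\tO(\Disc K_d)$ is realized by some $\tau\in\tO(\kdperp)$.

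The input I need is the structure of $K_d$, which follows from Hassett's explicit Gram matrices \cite{HassettPaper} (or a short computation embedding the complement of $\Z h$ in $K_d$ into $\primcublat$): when $d\equiv0\bmod6$ one has $K_d=\langle h\rangle\oplus\langle v_d\rangle$ orthogonally with $v_d^2=-d/3$, whereas when $d\equiv2\bmod6$ the sublattice $\Z h\oplus\Z v_d$ has index $3$ in $K_d$ (and $v_d^2=-3d$). In the case $d\equiv2\bmod6$ I would rule out (i) by an eigenlattice argument: if $\sigma_0$ were integral, its $(+1)$- and $(-1)$-eigenlattices would be $\Z h$ and $\Z v_d$, whence $2K_d\subseteq\Z h\oplus\Z v_d$, so the quotient $K_d/(\Z h\oplus\Z v_d)$ would be $2$-torsion; but this quotient is cyclic of order $3$, a contradiction. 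Hence no $f$ with $\varepsilon(f)=-1$ exists and the quotient is trivial, as claimed.

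When $d\equiv0\bmod6$, the orthogonal splitting makes $\sigma_0=\id_{\langle h\rangle}\oplus(-\id_{\langle v_d\rangle})$ manifestly an isometry of $K_d$, settling (i). For (ii) I would invoke the surjectivity of $\tO(\kdperp)\twoheadrightarrow\tO(\Disc\kdperp)$: the lattice $\kdperp$ is even and indefinite of signature $(2,19)$, and its rank $21$ far exceeds $\ell(\Disc\kdperp)\le 2$, so Nikulin's criterion applies and every automorphism of the discriminant form, in particular the one corresponding to $\overline{\sigma_0}$, lifts to a genuine isometry $\tau$. Gluing $\sigma_0\oplus\tau$ via Lemma~\ref{extendauto} yields $f\in\tO(\cublat)$ with $f(h)=h$, $f(K_d)=K_d$ and $f(v_d)=-v_d$; as fixing $h$ is the same as lying in $\stO(\primcublat)$, this $f\in\pmvdgroup$ has $\varepsilon(f)=-1$ and the quotient is $\Z/2\Z$. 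The main obstacle is exactly step (ii): one cannot simply take $\tau=-\id_{\kdperp}$, since then $\overline{\tau}=-\id$ disagrees with $\overline{\sigma_0}$ on the order-three part of $\Disc\kdperp$ coming from $h$ (where $\overline{\sigma_0}$ is the identity), and indeed $\id_{\langle h\rangle}\oplus(-\id_{\primcublat})$ fails to preserve $\cublat$ because $-\id\notin\stO(\primcublat)$. Realizing the \emph{mixed} automorphism $\overline{\sigma_0}$ (identity on the $\Z/3\Z$ factor, $-\id$ on the $\Z/(d/3)\Z$ factor) by an actual isometry of $\kdperp$ is precisely what Nikulin's surjectivity provides, and this is the heart of the argument.
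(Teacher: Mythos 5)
Your proof is correct, but it takes a genuinely different route from the paper's. You reduce everything to the existence of an isometry inverting $v_d$ (via the sign character, which gives the embedding $\pmvdgroup/\vdgroup\hookrightarrow\Z/2\Z$), and then settle existence or non-existence by pure lattice theory: for $d\equiv 0\mod 6$ you glue $\sigma_0=\id_{\Z h}\oplus(-\id_{\Z v_d})$ to some $\tau\in\tO(\kdperp)$ via Lemma~\ref{extendauto}, producing $\tau$ with the required \emph{mixed} discriminant action by Nikulin's surjectivity theorem (legitimate here: $\kdperp\subset\primcublat$ is even, indefinite of signature $(2,19)$, and $21\geq \ell(\Disc\kdperp)+2$); for $d\equiv 2\mod 6$ you kill integrality of $\sigma_0$ on $K_d$ by the eigenlattice argument ($2K_d\subseteq \Z h\oplus\Z v_d$ would force the index-$3$ quotient to be $2$-torsion). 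The paper does something else: it only proves the case $d\equiv 0\mod 6$ (the case $d\equiv 2\mod 6$ is quoted from Hassett), and it does so \emph{constructively}, using Eichler's criterion (Lemma~\ref{congruence}) to move $v_d$ into the normal form $e_2-\tfrac{d}{6}f_2\in U_2$ and then writing down $g$ explicitly as $-\id$ on $U_1\oplus U_2$ and $\id$ on the rest of $\cublat$. This explicitness is not cosmetic: that particular $g$ is the input for Section~\ref{InvoSection}, where its action on $\Disc\kdperp$ is computed and it is extended to the Mukai lattice to produce the vector $(3,L,d/6)$ of Theorem~\ref{VectorPlusPol}; your $f$, being non-constructive, would not feed into those computations. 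Conversely, your argument buys two things the paper's does not: it handles both congruence classes, and it avoids the normal form entirely, so it still works when $9\mid d$, where Lemma~\ref{congruence} cannot be applied to $v_d$ because $3\mid (v_d,v_d)=-d/3$. One slip of wording: $\cublat=K_d\oplus\kdperp$ is false --- since $\disc K_d=d\neq\pm 1$, the sum $K_d\oplus\kdperp$ is a proper finite-index sublattice of $\cublat$ --- but this is harmless, because what you actually use (restriction of $f$ to the two primitive complementary sublattices, and gluing of $\sigma\oplus\tau$ subject to $\overline{\sigma}=\overline{\tau}$) is exactly the correct content of Lemma~\ref{extendauto}.
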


As a consequence, $\overline{\gamma}$ is an isomorphism when $d\equiv 2 \mod 6$
and has degree two when $d\equiv 0\mod 6$.
In the latter case, the covering involution of $\overline{\gamma}$ is induced
by an automorphism $g\in\pmvdgroup$ whose class modulo $\vdgroup$ generates $\Z/2\Z$.
In the proof of the proposition for $d\equiv 0\mod 6$, which we will explain below,
an explicit such $g$ is constructed.

\medskip
Fix a primitive negative definite sublattice $K_d\subset \cublat$ containing $h$
of rank two and discriminant $d$, where $3|d$.
Let $v_d$ be a generator of $K_d\cap \primcublat$.
Suppose $T\in K_d$ is a primitive element such that $h$ and $T$ generate $K_d$.
Then $(h,T)$ is also divisible by 3. We can write
$v_d=\tfrac13(h,T)h-T$, which has square $-d/3$.
It follows that $K_d=\Z h\oplus\Z v_d$.

\begin{lemma}\label{congruence}
 Let $x\in \primcublat$ be primitive with $(x,x)\neq 0$ and $3\centernot|(x,x)$.
There exists an $f\in\stO(\primcublat)$ such that $f(x)=e_2+\tfrac{(x,x)}2 f_2$.
\end{lemma}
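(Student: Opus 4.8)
The plan is to deduce the statement from Eichler's criterion, which describes the orbits of primitive vectors under the stable orthogonal group of a lattice containing two orthogonal hyperbolic planes. Recall from the decomposition $\primcublat\cong\epart\oplus U^{\oplus 2}\oplus A_2(-1)$ that $\Disc\primcublat\cong\Disc A_2(-1)\cong\Z/3\Z$, since $\epart$ and $U$ are unimodular, and that $\primcublat$ contains $U_1\oplus U_2$ as an orthogonal direct summand. Eichler's criterion (see e.g.\ \cite[Ch.~14]{LecturesOnK3}) then says that two primitive vectors of $\primcublat$ lie in the same $\stO(\primcublat)$-orbit if and only if they have the same square and the same image in $\Disc\primcublat$. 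Here the image of a primitive vector $w$ is $w_*:=[w/\operatorname{div}(w)]\in\Disc\primcublat$, where $\operatorname{div}(w)$ is the positive generator of the ideal $(w,\primcublat)\subset\Z$. So it suffices to check that $x$ and the target vector $y:=e_2+\tfrac{(x,x)}2 f_2$ have the same square and the same discriminant image.

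First I would treat the target vector $y$. Since $\primcublat$ is even, $(x,x)$ is even and $\tfrac{(x,x)}2\in\Z$, so $y$ is a genuine lattice vector; it is primitive because its $e_2$-coefficient is $1$, and a direct computation in $U_2$ gives $(y,y)=(x,x)$. As $y$ pairs to $1$ with $f_2\in\primcublat$, we have $\operatorname{div}(y)=1$, so $y\in\primcublat$ already and $y_*=0$ in $\Disc\primcublat$.

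It remains to show that $x_*=0$ as well, and this is where the hypothesis $3\nmid(x,x)$ enters; this is the key step. The divisibility $\operatorname{div}(x)$ divides $(x,w)$ for every $w\in\primcublat$, in particular $\operatorname{div}(x)\mid(x,x)$, so from $3\nmid(x,x)$ I get $3\nmid\operatorname{div}(x)$. On the other hand $\operatorname{div}(x)\cdot x_*=[x]=0$ in $\Disc\primcublat$, so the order of $x_*$ divides $\operatorname{div}(x)$; as $\Disc\primcublat\cong\Z/3\Z$, an element whose order is prime to $3$ must be $0$, whence $x_*=0$. Thus $x$ and $y$ have equal square and equal (trivial) discriminant image, and Eichler's criterion produces the desired $f\in\stO(\primcublat)$ with $f(x)=y$.

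The main obstacle is conceptual rather than computational: one has to recognize that the correct invariant governing $\stO(\primcublat)$-orbits is the pair consisting of the square and the discriminant image, and then see that the arithmetic hypothesis $3\nmid(x,x)$ is exactly what forces that image to vanish via the divisibility bound $\operatorname{div}(x)\mid(x,x)$. Once this is isolated, everything else is routine. Note that the hypothesis $(x,x)\neq 0$ is not used in this orbit argument; it is presumably imposed for the intended application of the lemma.
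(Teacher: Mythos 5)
Your proof is correct and follows essentially the same route as the paper: both reduce to Eichler's criterion for $\stO(\primcublat)$ and use $3\nmid(x,x)$ together with $\operatorname{div}(x)\mid (x,x)$ and $\Disc\primcublat\cong\Z/3\Z$ to conclude that the discriminant image of $x$ is trivial. Your write-up just makes explicit the verification for the target vector $e_2+\tfrac{(x,x)}2 f_2$, which the paper leaves implicit in its citation of Eichler's criterion.
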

\begin{proof}
As $\Disc\primcublat = \Z/3\Z$, the integer
$n$ defined by $\{(x,y)\mid y\in\primcublat\}=n\Z$
is either 1 or 3. Since $3\centernot|(x,x)$, it must be 1.
Therefore the class $\overline{x}\in\Disc\primcublat$ is trivial.
By Eichler's criterion \cite[Prop.~3.3]{GrHuSa}
there is an $f\in\stO(\primcublat)$ sending $x$ to $e_2+\tfrac{(x,x)}2 f_2$.
\end{proof}

\begin{proof}[Proof of Proposition~\ref{indextwo}.]
Any $g\in\pmvdgroup$ whose class modulo $\vdgroup$ is non-zero satisfies $g(v_d)=-v_d$.
We construct an explicit such $g$.
By Lemma~\ref{congruence}, we can assume
\[v_d=e_2-\tfrac{d}6 f_2\in\cublat = E_8(-1)^{\oplus 2}\oplus U_1\oplus U_2\oplus \Z(-1)^{\oplus 3}.\]
We extend $v_d\mapsto -v_d$ to an element of $\stO(\primcublat)$
by taking multiplication with $-1$ on $U_1\oplus U_2$
and the identity on the rest of $\cublat$.
This is unique up to composition with elements of $\vdgroup$.
\end{proof}

We will denote by $g$ the automorphism of $\cublat$ given by
\[g|_{E_8(-1)^{\oplus 2}\oplus \Z(-1)^{\oplus 3}}=\id;\; g|_{U_1\oplus U_2}=-\id.\]

\section{The involution on \texorpdfstring{$\mM_d$}{Md}}\label{InvoSection}

We will now prove Theorem \ref{MukaiVector}.
In the previous section, we explained that the map
\[\overline{\gamma}\colon \qdom(\polktlat)\twoheadrightarrow \pmvdgroup\backslash\dom(\kdperp)\]
is an isomorphism if $d\equiv 2 \mod 6$
and has degree two if $d\equiv 0\mod 6$.
In the second case, define $\tau\colon\qdom(\polktlat)\to\qdom(\polktlat)$
to be the covering involution of $\overline{\gamma}$.
Note that $\tau$ maps $\mM_d$ to itself:
As explained in e.g.\ \cite[Rem.~6.3.7]{LecturesOnK3},
the complement of $\mM_d$ in $\qdom(\polktlat)$ is
\[\bigcup_{\delta\in\polktlat,\: \delta^2=-2}\delta^{\perp}\]
and this set is clearly preserved under $g$.

\begin{proposition}\label{TauIndependent}
The morphism $\tau$ does not depend on the choice of $\polktlat\cong\kdperp$.
\end{proposition}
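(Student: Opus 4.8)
The plan is to make explicit how $\tau$ is built from a choice of isomorphism $\phi\colon\polktlat\xrightarrow{\sim}\kdperp$ and then to show that changing $\phi$ leaves the resulting involution on $\qdom(\polktlat)$ unchanged. Recall that $\phi$ induces an isomorphism $\dom(\polktlat)\xrightarrow{\sim}\dom(\kdperp)$ which, after passing to quotients and using Proposition~\ref{identifyOrth}, descends to $\phi_*\colon\qdom(\polktlat)\xrightarrow{\sim}\vdgroup\backslash\dom(\kdperp)$. Writing $\overline{\tau}$ for the covering involution of the map $\overline{\gamma}$ — which is induced by $g_0:=g|_{\kdperp}$, the restriction of the element $g\in\pmvdgroup$ constructed above — the involution attached to $\phi$ is $\tau_\phi=\phi_*^{-1}\circ\overline{\tau}\circ\phi_*$. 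The key structural observation is that $\overline{\gamma}$, and hence $\overline{\tau}$, depends only on $K_d$ and not on $\phi$.

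First I would reduce the statement to a commutation relation. Any two isomorphisms $\phi,\phi'\colon\polktlat\xrightarrow{\sim}\kdperp$ differ by $\psi:=\phi'\circ\phi^{-1}\in\tO(\kdperp)$, so that $\phi'_*=\overline{\psi}\circ\phi_*$, where $\overline{\psi}$ is the automorphism of $\vdgroup\backslash\dom(\kdperp)$ induced by $\psi$. Substituting gives $\tau_{\phi'}=\phi_*^{-1}\circ(\overline{\psi}^{-1}\,\overline{\tau}\,\overline{\psi})\circ\phi_*$, so $\tau_{\phi'}=\tau_\phi$ holds precisely when $\overline{\tau}$ commutes with $\overline{\psi}$ for every $\psi\in\tO(\kdperp)$. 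To see that $\overline{\psi}$ is even well defined I would record that, by Lemma~\ref{extendauto} applied to the primitive sublattice $K_d$ of the unimodular lattice $\cublat$ with complement $\kdperp$, the restriction map identifies $\vdgroup$ with $\stO(\kdperp):=\ker(\tO(\kdperp)\to\tO(\Disc\kdperp))$, which is normal in $\tO(\kdperp)$; hence every $\psi$ preserves the $\vdgroup$-orbits and descends to the quotient.

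Next I would reduce the commutation of $\overline{\tau}$ and $\overline{\psi}$ to a statement about discriminant groups. Since these maps are induced by $g_0$ and $\psi$, they commute as soon as $[g_0,\psi]\in\vdgroup=\stO(\kdperp)$, that is, as soon as the images of $g_0$ and $\psi$ commute in $\tO(\kdperp)/\stO(\kdperp)\hookrightarrow\tO(\Disc\kdperp)$. It therefore suffices to show that the image $\overline{g_0}\in\tO(\Disc\kdperp)$ is \emph{central}. Under the canonical isomorphism $\Disc\kdperp\cong\Disc K_d$ this image corresponds to $\overline{g|_{K_d}}$. Because $v_d\perp h$ we have $K_d=\langle h\rangle\oplus\langle v_d\rangle$ with $g(h)=h$ and $g(v_d)=-v_d$, so $\overline{g|_{K_d}}$ is the identity on $\Disc\langle h\rangle\cong\Z/3\Z$ and equals $-\id$ on $\Disc\langle v_d\rangle$, a cyclic group of order $d/3$.

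The main point is this centrality, and it is exactly where the hypothesis enters. Condition \twostarsp forces $9\centernot|d$, hence $3\centernot|(d/3)$, so the two summands $\Z/3\Z$ and $\Z/(d/3)\Z$ are the $3$-primary and prime-to-$3$ parts of $\Disc K_d$. The primary decomposition is preserved by every element of $\tO(\Disc K_d)$, and on each primary component $\overline{g|_{K_d}}$ acts as $\pm\id$, which is central in the orthogonal group of any finite quadratic form. Thus $\overline{g_0}$ is central, the commutator $[g_0,\psi]$ lies in $\vdgroup$ for all $\psi$, and $\tau_{\phi}=\tau_{\phi'}$ follows. I expect the centrality of $\overline{g_0}$ — equivalently, the fact that an isometry of $\Disc\kdperp$ cannot mix the $h$-part with the $v_d$-part — to be the only genuine obstacle; everything else is formal once the identification $\vdgroup\cong\stO(\kdperp)$ is in place.
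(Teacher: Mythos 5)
Your proof is correct, and its skeleton --- a change of isomorphism acts by conjugation, two isometries induce the same map on the quotient as soon as they agree modulo the stable orthogonal group, so everything reduces to a commutation statement in the automorphism group of the discriminant --- is the same as the paper's. Where you genuinely diverge is in how that commutation is obtained, and on which side of the isomorphism you work. The paper stays on the K3 side: two choices of $\polktlat\cong\kdperp$ differ by precomposition with $f\in\tO(\polktlat)$, which replaces $g$ by $f^{-1}\circ g\circ f$; since $\Disc\polktlat\cong\Z/d\Z$ is cyclic, every automorphism of it is multiplication by a unit, so $\overline{f^{-1}gf}=\overline{f}^{-1}\overline{g}\,\overline{f}=\overline{g}$ with no further computation, and elements of $\tO(\polktlat)$ with equal discriminant action induce the same map on $\stO(\polktlat)\backslash\dom(\polktlat)$. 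You instead work on the cubic side and prove that $\overline{g_0}$ is \emph{central} in $\tO(\Disc\kdperp)$ by transporting it to $\Disc K_d\cong\Z/3\Z\oplus\Z/(d/3)\Z$, where it is $\id\oplus(-\id)$, i.e.\ a scalar on each primary component (using $9\centernot| d$). Both are valid; the paper's observation is shorter because cyclicity of $\Disc\polktlat$ makes the whole automorphism group abelian, while your argument isolates a more structural reason (scalar action on primary parts) and would survive even if the discriminant group were not cyclic --- at the cost of invoking the splitting of $\Disc K_d$ and the hypothesis $9\centernot| d$, which the paper's version never needs.
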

\begin{proof}
Precomposing the isomorphism $\polktlat\cong\kdperp$ with $f\in\tO(\polktlat)$
changes $g$ on $\polktlat$ to ${f^{-1}\circ g\circ f}$.
Note that this has the same action on $\Disc(\polktlat)\cong\Z/d\Z$ as $g$,
thus it induces the same action on
$\qdom(\polktlat)=\stO(\polktlat)\backslash\dom(\polktlat)$.
\end{proof}

For a K3 surface $S$, we denote by $\widetilde{\HH}(S,\Z)$
the full cohomology of $S$ with the Mukai pairing
and the Hodge structure of weight two defined by $\widetilde{\HH}{}^{2,0}(S):=\HH^{2,0}(S)$.
For a primitive vector $v=(r,\ell,s)\in\widetilde{\HH}(S,\Z)$,
denote by $M_S(v)$ the moduli space of stable coherent sheaves on $S$ with Mukai vector $v$,
with respect to a generic polarization.
Recall \cite[Ch.~10]{LecturesOnK3} that if there exists a $w$ in
$\widetilde{\HH}{}^{1,1}(S,\Z)= \HH^0(S,\Z)\oplus\HH^{1,1}(S,\Z)\oplus\HH^4(S,\Z)$
with $(v,w)=1$, then this is a fine moduli space.
If $(v)^2=0$ and $r>0$, then $M_S(v)$ is a K3 surface.

\begin{theorem}\label{VectorPlusPol}
Let $(\Stau,\Ltau)=\tau(S,L)$. Then $\Stau$ is isomorphic to the moduli space $M_S(v)$
of stable coherent sheaves on $S$ with Mukai vector $v=(3,L,d/6)$.
Under the natural identification
$\HH^2(\Stau,\Z)\cong v^{\perp}/\Z v\subset\widetilde{\HH}(S,\Z)$,
we have $\Ltau=(d,(d/3-1)L,d/3(d/6-1))$.
\end{theorem}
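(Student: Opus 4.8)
The plan is to reduce the assertion to the period-theoretic definition of $\tau$ and then match periods. By the global Torelli theorem, $(\Stau,\Ltau)$ is determined by its period in $\qdom(\polktlat)$, and by construction this is the image of the period of $(S,L)$ under the involution induced by $g$ through the identification $\polktlat\cong\kdperp$. So it suffices to exhibit a polarized K3 surface whose period is $\tau$ of that of $(S,L)$ and to recognise it as $(M_S(v),\Ltau)$. First I would check that $M_S(v)$ is a K3 surface of the expected type. Fixing a marking $\widetilde{\HH}(S,\Z)\cong\extktlat$ extending $\HH^2(S,\Z)\cong\ktlat$ with $L\mapsto\ell_d$, the Mukai vector $v=(3,L,d/6)$ corresponds to $3e_4+\ell_d-\tfrac d6 f_4$ under the identification $\extktlat\cong\mukailat$ fixed above. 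Then $(v)^2=L^2-6\cdot\tfrac d6=0$ and $v$ is primitive; since \twostarsp forces $9\nmid d$, we have $\gcd(3,d/6)=1$, so there is an algebraic class $w'$ supported on $\HH^0\oplus\HH^4$ with $(v,w')=1$, and $M_S(v)$ is a fine moduli space. As $(v)^2=0$ and $r=3>0$, it is a K3 surface and, by \cite[Ch.~10]{LecturesOnK3}, there is a Hodge isometry $\HH^2(M_S(v),\Z)\cong v^{\perp}/\Z v$ taking the two-form of $M_S(v)$ to $\sigma_S\in v^{\perp}\otimes\C$.

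Next I would pin down the polarization. For $w=(d,(d/3-1)L,d/3(d/6-1))$ a direct computation gives $(v,w)=0$ and $(w)^2=d$, and one checks that $\bar{w}\in v^{\perp}/\Z v$ is a primitive class of type $(1,1)$; hence $\Ltau:=\bar{w}$ is a degree-$d$ polarization with primitive cohomology $(\Ltau)^{\perp}=\bar{w}^{\perp}=(v^{\perp}\cap w^{\perp})/\Z v$. Since $\sigma_S$ is orthogonal to both $v$ and $w$, the period of $(M_S(v),\Ltau)$ is again the line spanned by $\sigma_S$, now inside $\dom((\Ltau)^{\perp})$. This will also yield the stated formula for $\Ltau$, once the identification $(M_S(v),\Ltau)=\tau(S,L)$ is in place.

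The core of the proof is to compare, after choosing isometries $L^{\perp}\cong\polktlat$ and $(\Ltau)^{\perp}\cong\polktlat$, the two period points carrying $[\sigma_S]$, and to show that they differ precisely by $\tau$. Because $\qdom(\polktlat)=\stO(\polktlat)\backslash\dom(\polktlat)$ only records the $\stO(\polktlat)$-orbit and elements of $\stO(\polktlat)$ act trivially on $\Disc\polktlat\cong\Z/d\Z$, it suffices—exactly as in Proposition~\ref{TauIndependent}—to compare the induced actions on $\Z/d\Z$. On the cubic side I would use the description from the proof of Proposition~\ref{indextwo}: with $v_d=e_2-\tfrac d6 f_2$ one has $\kdperp\cong\epart\oplus U_1\oplus\langle u\rangle\oplus A_2(-1)$, where $u=e_2+\tfrac d6 f_2$ has $u^2=d/3$ and $A_2(-1)$ is $h^{\perp}\cap\Z(-1)^{\oplus 3}$; comparing discriminants shows this is an equality of lattices. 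Here $g$ acts by $-\id$ on $U_1\oplus\langle u\rangle$ and by $\id$ on $\epart\oplus A_2(-1)$, so (using $9\nmid d$) it realises $\Disc\kdperp\cong\Z/(d/3)\oplus\Z/3$ with $g$ equal to $-\id$ on the first factor and $\id$ on the second.

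It then remains to compute the isometry $L^{\perp}\cong(\Ltau)^{\perp}$ coming from the Fourier--Mukai transform along the universal family of $v$ (both sides being abstractly $\cong\polktlat$ and carrying the same transcendental Hodge structure through $\sigma_S$) and to verify that its action on $\Disc\polktlat\cong\Z/d\Z$ is the same $(-\id,\id)$ on $\Z/(d/3)\oplus\Z/3$. This matching is the main obstacle; I expect it to reduce to an explicit congruence computation, and it is here that the rank $r=3$ of $v$ is essential, since it is precisely what produces the $\Z/3$ summand (the $A_2(-1)$) on which $g$ is trivial. Once the two actions on $\Z/d\Z$ are seen to agree, the periods of $(M_S(v),\Ltau)$ and of $\tau(S,L)$ coincide in $\qdom(\polktlat)$, and the Torelli theorem gives $(M_S(v),\Ltau)=(\Stau,\Ltau)$, proving both assertions.
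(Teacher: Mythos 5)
Your setup is sound, and most of the peripheral verifications are correct: $(v)^2=0$, $v$ primitive, $\gcd(3,d/6)=1$ so that $M_S(v)$ is a fine moduli space and a K3 surface; $(v,w)=0$ and $(w)^2=d$; the decomposition $\kdperp=\epart\oplus U_1\oplus\Z u\oplus A_2(-1)$ with $\overline{g}=(-\id,\id)$ on $\Z/(d/3)\oplus\Z/3$; and the reduction of the theorem to showing that the natural isometry $L^{\perp}\cong \overline{w}^{\perp}\subset v^{\perp}/\Z v$ acts on $\Disc\polktlat\cong\Z/d\Z$ in the same way as $g$ (this is indeed sufficient, since the discrepancy would then lie in $\stO(\polktlat)$ and hence act trivially on $\qdom(\polktlat)$). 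But your argument stops exactly where the content of the theorem lies: you write that this matching ``is the main obstacle'' and that you ``expect it to reduce to an explicit congruence computation,'' without carrying it out. Nothing in your text certifies that the composite of the Mukai identification $\HH^2(M_S(v),\Z)\cong v^{\perp}/\Z v$ with markings sending $L\mapsto\ell_d$ and $\overline{w}\mapsto\ell_d$ multiplies $\Disc\polktlat$ by $d/3-1$ rather than by, say, $1$ or $-1$ or $2d/3-1$; if it acted trivially, the same reasoning would ``prove'' $(M_S(v),\overline{w})\cong(S,L)$ instead of $\tau(S,L)$. So this is a genuine gap, not a routine omission. (A smaller untreated point: you never argue that $\overline{w}$ is primitive in the quotient $v^{\perp}/\Z v$ and nef/ample, i.e.\ that it really is the polarization class; in the paper this comes for free.)

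For comparison, the paper runs the argument in the opposite direction, which is precisely what makes the computation tractable. It first computes that $\overline{g}$ is multiplication by $d/3-1$ on $\Disc\kdperp\cong\Z/d\Z$ (your $(-\id,\id)$ on $\Z/(d/3)\oplus\Z/3$ is the same statement), then constructs an explicit involution $u$ of $\polktlat^{\perp}=\Z\ell_d\oplus U_4\subset\extktlat$ with the same discriminant action, so that Lemma~\ref{extendauto} glues $g\oplus u$ to an involution $\widetilde{g}\in\tO(\extktlat)$. The Mukai vector and the polarization are then \emph{read off} rather than guessed and verified: $v=\widetilde{g}^{-1}(0,0,1)=(3,\ell_d,d/6)$ and $\Ltau=\widetilde{g}^{-1}(\ell_d)=(d,(d/3-1)L,d/3(d/6-1))$, and the Hodge isometry induced by $\widetilde{g}$, composed with the cohomological Fourier--Mukai transform, identifies $\HH^2(\Stau,\Z)$ with $\HH^2(M_S(v),\Z)$, so Torelli gives $\Stau\cong M_S(v)$. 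If you try to complete your version, the missing congruence check amounts to exhibiting exactly such an extension of $g$ to the Mukai lattice; in other words, you would end up reconstructing $\widetilde{g}$ (or the involution $u$) anyway, so you should make that construction the first step rather than an expected afterthought.
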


Let us describe the strategy of the proof.
The restriction of $g\in\tO(\cublat)$ to $\kdperp$ can be viewed as an involution on $\polktlat$.
Because $g$ does not induce the identity on $\Disc\kdperp$,
this involution does not extend to an automorphism on $\ktlat$.
However, we will show that $g$ extends to $\widetilde{g}\in\tO(\extktlat)$.

\medskip
Using $\widetilde{g}$, we find $\Stau$ as follows.
Let $x$ be a representative of the period of $(S,L)$ in
${\Lambda_{d,\C}\subset \widetilde{\Lambda}_{\kthree,\C}}$.
The K3 surface $\Stau$ is the one whose period can be represented by $g(x)\in\Lambda_{d,\C}$.
The map $\widetilde{g}$ induces a Hodge isometry
$\widetilde{\HH}(S,\Z)\cong\widetilde{\HH}(\Stau,\Z)$,
so by the derived Torelli theorem (\cite{Orlov}, see also \cite[Prop.~16.3.5]{LecturesOnK3}),
$S$ and $\Stau$ are Fourier--Mukai partners.

More precisely, denote by $\widetilde{g}_{\muk}$ the morphism $\widetilde{g}$,
seen as an automorphism of $\mukailat$.
Let ${v=(r,\ell,s):=\widetilde{g}_{\muk}^{-1}(0,0,1)}$.
We will see that $r>0$, so there exists a universal sheaf $\mE$ on $S\times M_S(v)$.
Let $\Phi^H_{\mE}\colon \widetilde{\HH}(M_S(v),\Z)\to \widetilde{\HH}(S,\Z)$
be the induced cohomological Fourier--Mukai transform.
Then $(\Phi^H_{\mE})^{-1}\circ \widetilde{g}_{\muk}^{-1}$ sends
$\HH^2(\Stau,\Z)$ to $\HH^2(M_S(v),\Z)$, which shows that $\Stau$ is isomorphic to $M_S(v)$.

\medskip
To describe $\Ltau$, note that $\Phi^H_{\mE}$ induces an isomorphism
$\HH^2(M_S(v),\Z)\cong v^{\perp}/\Z v$, where $v^{\perp}\subset \widetilde{\HH}(S,\Z)$
(this is a result by Mukai, see \cite[Rem.~10.3.7]{LecturesOnK3}).
Thus, $\widetilde{g}_{\muk}^{-1}$ restricts to an isomorphism $\HH^2(\Stau,\Z)\cong v^{\perp}/\Z v$.
Under this identification, the polarization $\Ltau$ is mapped to
$\widetilde{g}_{\muk}^{-1}(\ell_d)$.

\begin{remark}
The extension $\widetilde{g}$ of $g$ is not unique. But if $\widetilde{g}'$ is another extension,
then $\widetilde{g}_{\muk}^{-1}\circ \widetilde{g}_{\muk}'$ is an orthogonal transformation
of $\mukailat$ sending $v' = (\widetilde{g}'_{\muk})^{-1}(0,0,1)$ to $v$.
This induces a Hodge isometry $\HH^2(M_S(v'),\Z)\cong \HH^2(M_S(v),\Z)$,
so $M_S(v')$ and $M_S(v)$ are isomorphic.
\end{remark}

\begin{remark}
The space $\qdom(\polktlat)$ can be interpreted as the moduli space of \emph{quasi-polarized}
K3 surfaces, i.e.\ pairs $(S,L)$ with $L$ the class of a big and nef line bundle,
see \cite[Sec.~5]{HulekPloog}. For such pairs the theorem is still valid.
\end{remark}

\begin{remark}
For $d\equiv 0\mod 6$, the ramification locus of $\overline{\gamma}$ over $\mM_d$ consists of those $(S,L)$
which are polarized isomorphic to $(\Stau,\Ltau)$.
It follows from \cite[Sec.~8]{HulekPloog} that $\overline{\gamma}$ is unramified over
$\{(S,L)\in\mM_d\mid \rho(S)=1\}$.
\end{remark}

\subsection{Proof of Theorem \protect\ref{MukaiVector}}
We will first compute the action of $g$ on $\Disc\kdperp \cong\Disc\polktlat$.
We have seen that $K_d\cong\Z h \oplus\Z v_d$, so
\[\Disc\kdperp\cong\Disc K_d\cong\Z/3\Z\oplus \Z/\tfrac{d}3\Z\cong\Z/d\Z,\]
since $9\centernot| d$.

\begin{lemma}
The action of $g$ on $\Disc \kdperp \cong\Z/d\Z$ is given by $x\mapsto(d/3-1)x$.
\end{lemma}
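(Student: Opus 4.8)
The plan is to move the computation from $\kdperp$, where the action of $g$ is opaque, to $K_d$, where it is transparent, and then translate through the chain of identifications $\Disc\kdperp\cong\Disc K_d\cong\Z/d\Z$ set up just above. The lattice $\cublat$ is unimodular, so $K_d$ and $\kdperp$ are orthogonal complements of one another in a unimodular lattice and the natural isomorphism $\Disc K_d\cong\Disc\kdperp$ applies. Since $g$ fixes $h$ and sends $v_d$ to $-v_d$, it preserves both $K_d=\Z h\oplus\Z v_d$ and its complement $\kdperp$; writing its restriction as $g|_{K_d}\oplus g|_{\kdperp}$ on the finite-index sublattice $K_d\oplus\kdperp$ and invoking Lemma~\ref{extendauto} (the ``only if'' direction, applied to $g$ itself as an extension) shows that the two induced automorphisms $\overline{g|_{K_d}}$ and $\overline{g|_{\kdperp}}$ agree under $\Disc K_d\cong\Disc\kdperp$. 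It therefore suffices to determine $\overline{g}$ on $\Disc K_d$.

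First I would describe $\Disc K_d$ explicitly. As $h^2=-3$, $v_d^2=-d/3$ and $h\perp v_d$, we have $K_d\cong\Z(-3)\oplus\Z(-d/3)$, so $\Disc K_d\cong\Z/3\Z\oplus\Z/\tfrac d3\Z$ with the two factors generated by the classes of $h/3$ and $3v_d/d$. Because $g(h)=h$ and $g(v_d)=-v_d$, the induced map $\overline g$ is the identity on the $\Z/3\Z$-factor and multiplication by $-1$ on the $\Z/\tfrac d3\Z$-factor.

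It then remains to transport $(+1,-1)$ through the Chinese remainder isomorphism $\Z/3\Z\oplus\Z/\tfrac d3\Z\cong\Z/d\Z$, which is valid precisely because $9\nmid d$ forces $\gcd(3,d/3)=1$. Under it, $(+1,-1)$ becomes multiplication by the unique residue $c\bmod d$ with $c\equiv1\pmod3$ and $c\equiv-1\pmod{d/3}$, and the claim is that $c=d/3-1$. The congruence $d/3-1\equiv-1\pmod{d/3}$ is immediate; the place where condition \twostarsp enters is the other one. Here I would argue that, since $d\equiv0\pmod6$ and $d$ is divisible neither by $4$ nor by $9$ nor by any odd prime $\equiv2\pmod3$, one has $2\,\|\,d$ and $3\,\|\,d$, so $d/3$ is twice a product of primes $\equiv1\pmod3$; hence $d/3\equiv2\pmod3$ and $d/3-1\equiv1\pmod3$. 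By uniqueness in the Chinese remainder theorem this pins down $c=d/3-1$, giving the asserted action $x\mapsto(d/3-1)x$ (and incidentally confirming that $d/3-1$ is a unit modulo $d$, as it must be). The only real care needed is bookkeeping: keeping the two identifications and the choice of generators consistent so that the answer emerges as $d/3-1$ rather than its inverse or negative. The arithmetic input from \twostarsp is exactly what guarantees that $d/3-1$ reduces to $1$, not $2$, modulo $3$, and this is the one nontrivial point in the argument.
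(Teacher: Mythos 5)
Your proof is correct, and it reaches the paper's conclusion by a slightly different and more elementary route. Both arguments share the same first half: transfer the computation to $\Disc K_d$ (your justification via the ``only if'' direction of Lemma~\ref{extendauto} is exactly what the paper leaves implicit) and observe that $\overline{g}$ acts on $\Disc K_d \cong \Z/3\Z\oplus\Z/\tfrac{d}{3}\Z$ as the identity on the first factor and as $-1$ on the second. The difference lies in converting this pair into a multiplier on $\Z/d\Z$. You argue by pure CRT: the multiplier is the unique $c$ with $c\equiv 1\mod 3$ and $c\equiv -1 \mod d/3$, and since \twostarsp forces $d/3\equiv 2\mod 3$, the value $c=d/3-1$ satisfies both congruences, so uniqueness finishes the proof. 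The paper instead writes the action as $x\mapsto \alpha x$, deduces $\alpha\equiv -1\mod d/3$ from an order computation (leaving $\alpha\in\{-1,\,d/3-1,\,2d/3-1\}$), excludes $\alpha=-1$ directly, and excludes $\alpha=2d/3-1$ by using that $\overline{g}$ preserves the discriminant quadratic form $q_{\kdperp}$ with values in $\Q/2\Z$; the congruence $d/3\equiv 2\mod 3$ enters there as well. Your route buys simplicity: it needs only the group structure of the discriminant group and never the quadratic form, because the condition $c\equiv 1\mod 3$ (which the paper never extracts explicitly) is read off directly from the action on the $\Z/3\Z$-factor, whereas the paper's quadratic-form step is in effect a substitute for precisely this observation. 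Both proofs rest on the same arithmetic input from \twostar, namely that $d/6$ is a product of primes $\equiv 1\mod 3$, hence $d/3\equiv 2\mod 3$.
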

\begin{proof}
On $\Z/3\Z\oplus \Z/\tfrac{d}3\Z$, the map $\overline{g}$ is given by
$(1,0)\mapsto (1,0)$ and $(0,1)\mapsto(0,-1)$ modulo $d$.
Let $\alpha$ be such that $\overline{g}$ acts on $\Z/d\Z$ by $x\mapsto \alpha x$.
We can compute $\alpha$ as follows: let $(s,t)$ be any generator of $\Z/3\Z\oplus \Z/\tfrac{d}3\Z$.
Then
\[((\alpha+1)s,(\alpha+1)t) = \overline{g}(s,t)+(s,t) = (2s,0)\]
has order 3.
Since $9\centernot| d$, this means that $t(\alpha+1)\equiv 0\mod d/3$, and thus
$\alpha\equiv -1\mod d/3$.
So $\alpha$ is in $\{-1,d/3-1,2d/3-1\}$.

We directly see that $\alpha = -1$ is not possible, since then $\alpha(s,t)+(s,t)=0 \neq 2s$.
Next, we should have $q_{\kdperp}(\overline{g}(s,t)) = q_{\kdperp}(s,t)\in\Q/2\Z$.
Now $q_{\kdperp}(s,t) = n/d$ for some $n\in\Z$ with $\gcd(n,d) = 1$.
Suppose that $\alpha=2d/3-1$. Then
\[q_{\kdperp}(\alpha(s,t)) = (2d/3-1)^2q_{\kdperp}(s,t) = 4n/3(d/3-1)+q_{\kdperp}(s,t).\]
Since $n$ is not divisible by 3 and $d/3\equiv 2\mod 3$,
the number $4n/3(d/3-1)$ is not an integer,
so $q_{\kdperp}(\alpha(s,t))\neq q_{\kdperp}(s,t)$.
We conclude that $\alpha = d/3-1$.
\end{proof}

To extend $g$ on $\polktlat$ to $\extktlat$ it thus suffices, by Lemma~\ref{extendauto},
to find an orthogonal transformation of $\polktlat^{\perp}=\Z \ell_d\oplus U_4$
acting on the discriminant group by $x\mapsto(d/3-1)x$.
Consider $u\in\tO(\Z \ell_d\oplus U_4)$ defined by
\begin{align*}
 e_4&\mapsto -\tfrac{d}6 e_4-\tfrac13 \left(\tfrac{d}6-1\right) \ell_d+
 \tfrac13\left(\tfrac{d}6-1\right)^2f_4\\
 f_4&\mapsto 3e_4+\ell_d-\tfrac{d}6f_4\\
 \ell_d&\mapsto de_4+\left( \tfrac{d}3-1\right)\ell_d-\tfrac{d}3\left(\tfrac{d}6-1\right)f_4.
\end{align*}
One computes that this is an involution.
The discriminant group $\Disc(\Z \ell_d\oplus U_4)\cong\Z/d\Z$
is generated by the class $\overline{\tfrac{1}{d}\ell_d}$,
which $\overline{u}$ multiplies by $d/3-1$.
So the action of $\overline{u}$ on $\Z/d\Z$ is given by $x\mapsto (d/3-1)x$.

\medskip
It follows that
$g\oplus u\in\tO(\polktlat\oplus \Z\ell_d\oplus U_4)$
extends to $\widetilde{g}\in\tO(\extktlat)$.
Since $\widetilde{g}$ is an involution, we have
$\widetilde{g}^{-1}(f_4) = \widetilde{g}(f_4) = 3e_4+\ell_d-\tfrac{d}6f_4$.
As an element of the Mukai lattice, this is
\[v=(3,\ell_d,d/6)\in \mukailat = \Z e_4\oplus\ktlat\oplus \Z(-f_4).\]
The polarization $\Ltau=\widetilde{g}^{-1}(\ell_d)$,
seen as an element of $v^{\perp}/\Z v\subset \widetilde{\HH}(S,\Z)$, is
\[\Ltau = (d,(d/3-1)L,d/3(d/6-1)).\]
This finishes the proof of Theorem~\ref{MukaiVector}.

\begin{remark}\label{otherd}
As shown in the proof of Proposition \ref{TauIndependent}, we can replace $g$ by any
element of $\tO(\polktlat)$ with the same action on $\Disc\polktlat$.
For instance, we can take the automorphism given by the identity
on $\epart\oplus U_1$ and $u$ on $U_2\oplus \Z(e_3-\tfrac{d}2f_3)\cong (\Z\ell_d\oplus U_4)(-1)$.
This allows us to define $\tau$ on $\mM_d$ for all $d\equiv 0\mod 6$ with $d/6\equiv 1\mod 3$.
\end{remark}

\section{Birationality of Hilbert schemes}
In this section we study the Hilbert schemes of $n$ points $\hilb^n(S)$ and $\hilb^n(\Stau)$
of our K3 surfaces $S$ and $\Stau$.
Corollary \ref{HilbIsoCor} and the results in Section \ref{HigherDimHilb}
hold for all $d$ such that $d/6\equiv 1\mod 3$, using Remark \ref{otherd}.

\subsection{Hilbert schemes of two points}
For a cubic fourfold $X$ we denote by $F(X)$ the Fano variety of lines on $X$,
a four-dimensional hyperkähler variety of $\kthree^{[2]}$ type.
Hassett proved the following:
\begin{theorem}[{\cite[Thm.\ 6.1.4]{HassettPaper}}]
Assume that $d$ satisfies
\[d = 2(n^2+n+1)\]
for some integer $n\geq 2$. Let $X$ be a generic cubic fourfold in $\mC_d$.
Then $F(X)$ is isomorphic to $\hilb^2(S)$, where $(S,L)\in\mM_d$ is associated to $X$.
\end{theorem}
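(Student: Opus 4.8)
The plan is to realize $F(X)$ as a manifold of $\kthree^{[2]}$ type and then to read off, from its Beauville--Bogomolov--Fujiki (BBF) lattice, exactly when it acquires the cohomological fingerprint of a Hilbert scheme of two points. The input I would start from is Beauville--Donagi: for every smooth cubic fourfold $X$ the Fano variety $F(X)$ is an irreducible holomorphic symplectic fourfold of $\kthree^{[2]}$ type, the lattice $\HH^2(F(X),\Z)$ with the BBF form is isometric to $\ktlat\oplus\langle-2\rangle$, and the Fano (Abel--Jacobi) correspondence induces a Hodge isometry
\[\HH^4(X,\Z)^-_{\prim}\xrightarrow{\sim}g_F^{\perp}\subset\HH^2(F(X),\Z),\]
where $g_F$ is the Plücker polarization, with $g_F^2=6$ and divisibility $2$. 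The divisibility-$2$ statement is exactly what makes $\disc(g_F^{\perp})=3$ agree with $\disc\primcublat$, and it is the normalization I would double-check first. As a consequence the transcendental lattices satisfy $T(F(X))\cong T(X)$ as Hodge structures.

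Next I would invoke the Hodge-theoretic criterion for being a Hilbert square: a primitive class $\delta\in\HH^2(F(X),\Z)$ with $\delta^2=-2$ and divisibility $2$ has orthogonal complement $\delta^{\perp}$ that is even, unimodular of signature $(3,19)$, hence abstractly isometric to $\ktlat$, and with the induced weight-two Hodge structure it is the $\HH^2$ of a K3 surface, with $\delta$ playing the role of half the exceptional class of a Hilbert--Chow contraction. So the whole question becomes: for which $d$ does the algebraic lattice $\ns(F(X))$ of a \emph{generic} $X\in\mC_d$ contain such a $\delta$? Here genericity gives $\rk A(X)=2$, so $\ns(F(X))$ has rank two; it is the saturation of $\langle g_F\rangle\oplus\langle v'\rangle$, where $v'$ is the Abel--Jacobi image of the generator $v_d$ of $K_d\cap\primcublat$, orthogonal to $g_F$ with $(v')^2=v_d^2$ a value determined by $d$.

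The arithmetic heart is then a computation inside this rank-two lattice. Writing $\delta=ag_F+bv'$ and imposing $\delta^2=-2$ together with divisibility $2$ (which forces $\delta$ to represent the nontrivial class of $\Disc\HH^2(F(X),\Z)\cong\Z/2$) produces a Diophantine equation in $a$ and $b$ whose solvability depends only on $d$. I expect this to unwind to the condition $d=2(n^2+n+1)$, the quadratic $n^2+n+1$ being the value at $(n,1)$ of the Eisenstein norm form $x^2+xy+y^2$, i.e.\ of the $A_2$-lattice form hidden in the summand $A_2(-1)$ of $\primcublat\cong\epart\oplus U^{\oplus2}\oplus A_2(-1)$. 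The main obstacle is precisely this bookkeeping: controlling the index of $\langle g_F\rangle\oplus\langle v'\rangle$ in its saturation $\ns(F(X))$, the exact value of $v_d^2$, and the divisibility and discriminant-form constraints, so that the existence of a divisibility-$2$ class of square $-2$ is shown to be \emph{equivalent} to, and not merely implied by, the stated numerical condition.

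Finally I would identify the K3 surface and promote the Hodge isometry to an isomorphism of varieties. Since $\delta^{\perp}$ contains $T(F(X))\cong T(X)\cong T(S)$, where $(S,L)\in\mM_d$ is the K3 surface associated to $X$, the transcendental part of $\delta^{\perp}$ is that of $S$; hence the K3 surface produced by the criterion is $S$, and the isometry $\HH^2(F(X),\Z)\cong\HH^2(\hilb^2(S),\Z)$ restricts to a Hodge isometry sending $\delta$ to the class of half the exceptional divisor. To conclude $F(X)\cong\hilb^2(S)$ I would apply the Torelli theorem for $\kthree^{[2]}$-type manifolds, after arranging that this isometry is a parallel-transport operator carrying the ample class $g_F$ into the Kähler cone; for generic $X$ the Picard rank is minimal, so no wall obstructs this choice. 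A route that avoids the full Torelli theorem---and which is essentially Beauville--Donagi's original one for the Pfaffian case $d=14$, i.e.\ $n=2$---is to establish the isomorphism there by hand and then spread it out over the irreducible divisor $\mC_d$ by a deformation and monodromy argument.
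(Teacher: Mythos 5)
First, a remark on the comparison itself: the paper does \emph{not} prove this statement --- it is quoted from Hassett, whose original argument is a moduli-theoretic deformation argument that predates the global Torelli theorem. Your proposal is instead the modern Torelli-based route, and it closely parallels the proof the paper gives for the (unnumbered) lemma following Addington's theorem in Section 4: construct a Hodge isometry and invoke Markman's birational Torelli theorem \cite[Cor.~9.9]{Markman}. Up to the point where you conclude \emph{birationality}, your outline is sound: the Beauville--Donagi input is stated correctly (including $g_F^2=6$, divisibility $2$, so $\disc(g_F^{\perp})=3=|\disc\primcublat|$), the Diophantine bookkeeping does unwind to $d=2(n^2+n+1)$ (on the Hilbert-scheme side this is exactly the statement that $2L-(2n+1)\delta$ has square $6$ and divisibility $2$), and for $\kthree^{[2]}$-type manifolds any Hodge isometry becomes a parallel-transport operator after composing with $\pm\id$, since the monodromy group is the full group of orientation-preserving isometries. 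Two repairable inaccuracies along the way: (1) Hodge-isometric transcendental lattices do \emph{not} determine the K3 surface --- Fourier--Mukai partners such as $S$ and $\Stau$ share them, and for $d\equiv 0\bmod 6$ the associated K3 is not unique (that is the subject of this paper) --- so your identification of the K3 produced by your criterion with ``the'' associated $S$ is an invalid inference; what one should show is that the produced K3 is \emph{associated} to $X$, which follows by identifying both $K_d^{\perp}$ and the primitive lattice $(L')^{\perp}$ with $\ns(F(X))^{\perp}$ inside $\HH^2(F(X),\Z)$. (2) The fallback ``spread out from the Pfaffian case'' proves only the case $d=14$: one cannot deform from $\mC_{14}$ into a different divisor $\mC_d$, so it gives nothing for $d=26,42,62,\dots$.

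The genuine gap is the final step, upgrading ``birational'' to ``isomorphic''. Your justification --- ``for generic $X$ the Picard rank is minimal, so no wall obstructs this choice'' --- fails on both counts. A generic $X\in\mC_d$ has $\rk A(X)=2$, hence $\rho(F(X))=2$, not $1$; and a rank-two N\'eron--Severi lattice can very well contain wall classes. By Debarre--Macr\`i \cite[Thm.~5.1]{DebarreMacri}, as used in Proposition~\ref{BirModelHilb} of the paper, the walls of the ample cone inside the movable cone of a $\kthree^{[2]}$-type fourfold are orthogonal to classes of square $-10$ and divisibility $2$; whether such classes exist in a rank-two lattice is an arithmetic condition on $d$. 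The paper can exclude them only when $3\mid d$ (this is the entire content of Proposition~\ref{BirModelHilb}), whereas Hassett's condition $d=2(n^2+n+1)$ includes many $d$ with $3\nmid d$. Concretely, for $d=62=2(5^2+5+1)$ the Pell solution $657^2-124\cdot 59^2=5$ yields the class $118L-657\delta\in\ns(\hilb^2(S))=\Z L\oplus\Z\delta$, which has square $118^2\cdot 62-2\cdot 657^2=-10$ and divisibility $\gcd(118,1314)=2$; so wall-crossing phenomena genuinely occur for Hassett discriminants --- this is precisely the difficulty the paper flags in Remark~\ref{AddingtonIsomorphic} and leaves open. As it stands, your argument proves that $F(X)$ is birational to a Hilbert square, but not that it is isomorphic to one; closing the gap requires either an actual chamber analysis (showing $F(X)$ occupies a chamber of the movable cone corresponding to a Hilbert-square birational model) or a return to Hassett's deformation-theoretic argument, which produces the isomorphism directly without Torelli.
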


If also $d\equiv 0\mod 6$,
that $F(X)$ is isomorphic to both $\hilb^2(S)$ and $\hilb^2(\Stau)$
(Hassett calls $F(X)$ \emph{ambiguous}).
Since birationality specializes in families of hyperkähler manifolds,
it follows that $\hilb^2(S)$ is birational to $\hilb^2(\Stau)$
for all K3 surfaces $S$ of degree $d$.

\medskip
We can generalize this using the following result by Addington.
See also Remark \ref{AddingtonIsomorphic}.
\begin{theorem}[{\cite[Thm.~2]{AddingtonRatConj}}]
A cubic fourfold $X$ lies in $\mC_d$ for some $d$ satisfying
\[\text{\threestarsp}\htab a^2d = 2(n^2+n+1)\]
if and only if $F(X)$ is birational to $\hilb^2(S)$ for some K3 surface $S$.
\end{theorem}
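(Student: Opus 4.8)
The plan is to translate the geometric question into lattice theory on the hyperk\"ahler side, apply the Torelli and monodromy theorems, and finish with an arithmetic computation.

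Since $F(X)$ is of $\kthree^{[2]}$ type, I would work with the Beauville--Bogomolov--Fujiki form on $\HH^2(F(X),\Z)$, under which this lattice is isometric to $\Lambda:=U^{\oplus 3}\oplus\epart\oplus\Z(-2)$, the Pl\"ucker polarization $h_F$ satisfying $h_F^2=6$ and having divisibility two in $\Lambda$. By the Beauville--Donagi isomorphism the primitive cohomology of $F(X)$ is Hodge isometric to $\HH^4_{\prim}(X,\Z)(-1)$, so the condition $X\in\mC_d$ is equivalent to the presence in $\ns(F(X))$ of a rank-two sublattice containing $h_F$ whose discriminant is a fixed multiple of $d$; for generic $X\in\mC_d$ the lattice $\ns(F(X))$ has rank two exactly. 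On the other side, $\HH^2(\hilb^2(S),\Z)\cong\HH^2(S,\Z)\oplus\Z\delta$ with $\delta^2=-2$, and the Hilbert--Chow class $\delta$ has divisibility two in $\Lambda$.

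I would then invoke the global Torelli theorem for hyperk\"ahler manifolds (Verbitsky) together with Markman's description of the monodromy group and of parallel-transport operators: two manifolds of $\kthree^{[2]}$ type are birational if and only if their second integral cohomologies are related by a monodromy Hodge isometry. Because a primitive class of square $-2$ and divisibility two is unique up to this group (Eichler's criterion, compare \cite{GrHuSa}), this shows that $F(X)$ is birational to $\hilb^2(S)$ for some K3 surface $S$ if and only if $\ns(F(X))$ contains a primitive class $\delta$ with $\delta^2=-2$ and $\mathrm{div}_{\Lambda}(\delta)=2$. The surface $S$ is then produced from the transcendental lattice of $F(X)$ by surjectivity of the K3 period map, and $\delta^{\perp}$ supplies its Hilbert--Chow structure. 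Checking that the resulting isometry is orientation preserving and lies in the monodromy group, rather than merely in $\tO(\Lambda)$, is a point that needs care but is handled by the known structure of these groups.

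It remains to decide for which $d$ the lattice $\ns(F(X))=\langle h_F,w\rangle$ carries such a $\delta$. Writing $\delta=p\,h_F+q\,w$, the equation $\delta^2=-2$ together with the parity constraint that selects divisibility two becomes a binary quadratic Diophantine system whose solvability depends only on the genus of $\ns(F(X))$, hence only on $d$. After diagonalising, this reduces to a Pell-type equation
\[ t^2-2d\,a^2=-3, \]
equivalently $2d\,a^2=t^2+3$; writing $t=2n+1$ gives $t^2+3=4(n^2+n+1)$, so that the solutions are parametrised exactly by $a^2d=2(n^2+n+1)$, which is condition \threestar. The constant $3$, and hence the shape $n^2+n+1$, is the fingerprint of the $A_2$-lattice $\langle\lambda_1,\lambda_2\rangle$ with $A_2^{\perp}\cong\primcublat$ inside $\extktlat$. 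I expect this last step to be the main obstacle: pinning down the exact Gram matrix of $\ns(F(X))$ --- which requires tracking the Beauville--Donagi scaling of discriminants and the gluing $\ns\oplus T\hookrightarrow\Lambda$ --- and correctly reading off the divisibility-two condition are where the real work lies, the preceding reductions being formal consequences of the Torelli and monodromy theorems.
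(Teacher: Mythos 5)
This theorem is not proved in the paper at all: it is quoted verbatim from Addington \cite[Thm.~2]{AddingtonRatConj}, so there is no internal proof to compare against, and the only trace of Addington's argument in the paper is the lemma immediately following, which records his lattice-theoretic criterion ($\psi(\lambda_1)=e_4+f_4$). Measured against Addington's actual proof, your outline is essentially a correct reconstruction, in a slightly different but equivalent packaging. Your reduction does work out: for $3\mid d$ one has $\ns(F(X))\supset \Z g\oplus\Z v_d$ with $g^2=6$, $v_d^2=-d/3$, where $g$ has divisibility two and $v_d$ divisibility one in $\HH^2(F(X),\Z)$; so a divisibility-two class $\delta=pg+qv_d$ forces $q=2a$, and $\delta^2=-2$ becomes $(3p)^2-2da^2=-3$. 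Since $t=3p$ is automatically odd, writing $t=2n+1$ turns $t^2+3=2da^2$ into $a^2d=2(n^2+n+1)$, i.e.\ \threestar, exactly as you predicted. The organizational difference is that Addington does not work in $\HH^2$ of the hyperk\"ahler manifolds with divisibility bookkeeping at all: he embeds everything into the unimodular Mukai lattice $\extktlat$, identifying $\HH^2(F(X),\Z)$ with $\lambda_1^{\perp}$ and $\HH^2(\hilb^2(S),\Z)$ with $(1,0,-1)^{\perp}$. There the gluing problems you flag as ``the real work'' largely evaporate: any Hodge isometry $\lambda_1^{\perp}\cong(1,0,-1)^{\perp}$ glues with $\lambda_1\mapsto(1,0,-1)$ because the discriminant group is $\Z/2\Z$, and the birationality criterion becomes the existence of an algebraic $\mu$ with $\mu^2=0$ and $(\mu,\lambda_1)=1$, which is a cleaner Diophantine problem. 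That is what each approach buys: yours stays close to the geometry ($(-2)$-classes of divisibility two are Hilbert--Chow classes), his minimizes lattice gymnastics.

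Two points in your sketch need repair, though neither is fatal. First, your appeal to ``the genus of $\ns(F(X))$'' is not a valid principle: for \emph{binary} indefinite lattices the genus does not determine the class, so solvability cannot be read off from the genus alone. What actually saves the argument is that the rank-two sublattice of $\ns(F(X))$ containing $h_F$ coming from $X\in\mC_d$ is determined by $d$ up to the relevant isometry group (this is Hassett's Prop.~3.2.4, quoted in the paper), so the specific lattice $\Z g\oplus\Z v_d$, not just its genus, is pinned down. Second, the ``only if'' direction concerns an arbitrary cubic $X$ with $F(X)$ birational to some $\hilb^2(S)$, not a generic member of a given $\mC_d$: from the classes $h_F$ and $\delta$ you must manufacture a rank-two sublattice of $A(X)$ containing $h_X$, compute its discriminant $d$, and verify \threestarsp for that $d$; this transfer back through the Beauville--Donagi isometry is precisely the step your sketch waves at, and it is again where the Mukai-lattice formulation is the more efficient tool.
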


Note that \threestarsp implies \twostar.

\begin{lemma}
Suppose that $d$ satisfies \threestar.
Then there exists a choice of the rational map $\varphi\colon\mM_d\dashrightarrow\mC_d$
such that if $(S,L)\in\mM_d$ is associated to $X\in\mC_d$ via $\varphi$,
then $\hilb^2(S)$ and $F(X)$ are birational.
\end{lemma}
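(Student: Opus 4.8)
The plan is to show that the K3 surface produced by Addington's theorem is already one of the surfaces associated to $X$, so that a suitable choice of $\varphi$ realizes it. Fix a generic $X\in\mC_d$, so that $A(X)=K_d$ has rank two. Since $d$ satisfies \threestar, Addington's theorem provides a K3 surface $S'$ with $F(X)$ birational to $\hilb^2(S')$. As birational hyperkähler manifolds have Hodge-isometric second cohomology, there is a Hodge isometry $\HH^2(\hilb^2(S'),\Z)\cong\HH^2(F(X),\Z)$. Restricting to the transcendental lattices and using the Beauville--Donagi Hodge isometry $\HH^2(F(X),\Z)_{\prim}\cong\HH^4(X,\Z)^-_{\prim}$, I obtain a Hodge isometry $T(S')\cong T(X)$ between the transcendental lattice of $S'$ and that of $\HH^4(X,\Z)^-$ (suppressing the Tate twist, as above).

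Next I would pin down the polarization. Because $A(X)=K_d$ has rank two, $T(X)=K_d^{\perp}$ has rank $21$, so $T(S')$ has rank $21$ and hence $\rho(S')=1$; write $\ns(S')=\Z L'$ with $(L')^2=d'$. Since both $\ktlat=\HH^2(S',\Z)$ and $\cublat=\HH^4(X,\Z)^-$ are unimodular, the discriminant group of a transcendental lattice is isomorphic to that of its orthogonal complement; hence $|\disc T(S')|=d'$ and $|\disc T(X)|=\disc K_d=d$. The isometry $T(S')\cong T(X)$ then forces $d'=d$, so $(S',L')\in\mM_d$. Moreover $(L')^{\perp}=T(S')\cong T(X)=K_d^{\perp}=A(X)^{\perp}$ as Hodge structures, which is precisely the condition that $(S',L')$ be associated to $X$ (taking $K=K_d$).

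It remains to convert ``associated'' into a statement about $\varphi$. As recalled before the definition of associated K3 surfaces, the existence of a Hodge isometry $(L')^{\perp}\cong K_d^{\perp}$ induces an isomorphism $\polktlat\cong\kdperp$ whose associated rational map $\varphi$ sends $(S',L')$ to $X$. Choosing this $\varphi$ and setting $(S,L)=(S',L')$ gives $\hilb^2(S)=\hilb^2(S')$ birational to $F(X)$, as desired. To upgrade this from the single fibre to generic $(S,L)\in\mM_d$ through the \emph{fixed} $\varphi$, I would argue that birationality of hyperkähler manifolds is constant in families, exactly as used above for the case $a=1$: the isomorphism $\polktlat\cong\kdperp$ defining $\varphi$ yields, together with Beauville--Donagi, a fibrewise Hodge isometry of transcendental lattices over all of $\mM_d$, and the full birational equivalence, holding at one generic fibre, then holds at every generic fibre.

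The step I expect to be the main obstacle is this last one: ensuring that a single choice of $\varphi$ works for all generic $(S,L)$ rather than fibre by fibre. This requires that the extension of the transcendental Hodge isometry to a Hodge isometry of the full second cohomologies of $\hilb^2(S)$ and $F(X)$ is governed by a monodromy-invariant lattice condition, so that its validity propagates through the family; the genericity hypothesis $A(X)=K_d$ (and hence $\rho(S)=1$) is what keeps the discriminant bookkeeping of the previous paragraph uniform across $\mM_d$.
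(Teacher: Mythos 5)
Your single-fibre construction is correct, and it takes a genuinely different route from the paper's: you invoke Addington's theorem geometrically at one cubic $X$, use the easy direction (a birational map of hyperkähler manifolds induces a Hodge isometry on $\HH^2$) together with Beauville--Donagi and a discriminant count to show that Addington's K3 surface is associated to $X$, and let that Hodge isometry define $\varphi$. The paper instead works purely lattice-theoretically: Addington's characterization of \threestarsp says that the isometry $\psi\colon T\to\Z\ell_d\oplus U_4$ can be chosen with $\psi(\lambda_1)=e_4+f_4$; extending $\psi$ to $\tO(\extktlat)$ defines $\varphi$, and then for \emph{every} pair associated via this $\varphi$ one gets Hodge isometries $\HH^2(F(X),\Z)\cong\lambda_1^{\perp}\cong(e_4+f_4)^{\perp}\cong\HH^2(M_S(1,0,-1),\Z)$, and Markman's birational Torelli theorem finishes fibre by fibre.

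The genuine gap is the step you yourself flag: passing from your one fibre to all fibres, which is the actual content of the lemma (a single $\varphi$ must work uniformly). The mechanism you propose does not do this. A fibrewise Hodge isometry of \emph{transcendental} lattices exists for \emph{any} choice of $\varphi$ --- it is literally the definition of ``associated'' --- and it carries no birational information: Example~\ref{HilbTwoNotBir} ($d=6\cdot 73$) exhibits two families, $\hilb^2(S)$ and $\hilb^2(\Stau)$, whose fibres have Hodge-isometric transcendental lattices (since $S$ and $\Stau$ are derived equivalent) yet are never birational on the $\rho=1$ locus. Likewise, ``birationality is constant in families'' is not valid in the direction you need: birationality \emph{specializes} (that is how the paper uses it for the case $a=1$), but it does not generize from a single fibre (compare $\hilb^2$ of a varying family of K3s against the constant family $\hilb^2(S_0)$: birational at $S_0$, not at a very general fibre). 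Finally, any propagation argument must convert a Hodge isometry back into a birational map, i.e.\ it needs Markman's birational Torelli theorem \cite[Cor.~9.9]{Markman}, which you never invoke --- you only use the forward implication. The repair is short once these are in place: read your full Hodge isometry $\HH^2(\hilb^2(S'),\Z)\cong\HH^2(F(X),\Z)$ through markings as a constant lattice isometry $\Psi$; by your construction, its restriction to $\polktlat$ \emph{is} the identification $\polktlat\cong\kdperp$ defining $\varphi$, so $\Psi$ maps the period of any $(S,L)$ to the period of $\varphi(S,L)$ and is therefore a Hodge isometry at every point where $\varphi$ is defined; Markman's theorem then gives birationality fibrewise. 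Without this (or the paper's explicit construction), the lemma is proved only at one point.
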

\begin{proof}
Consider the primitive sublattices
\[\kdperp\oplus T\subset\extktlat\supset \polktlat\oplus\Z\ell_d\oplus U_4,\]
where $T\supset A_2=\langle \lambda_1,\lambda_2\rangle$
is the orthogonal complement of $\kdperp$ in $\extktlat$.
Then $d$ satisfies \twostarsp if and only if $T\cong\Z\ell_d\oplus U_4$.
Addington showed that \threestarsp holds if and only if $\psi\colon T\to\Z\ell_d\oplus U_4$
can be chosen such that $\psi(\lambda_1)=e_4+f_4$.
Extend $\psi$ to an element of $\tO(\extktlat)$ (use Lemma~\ref{extendauto}
and \cite[Thm.~14.2.4]{LecturesOnK3})
and let $\varphi$ be the induced map $\mM_d\dashrightarrow\mC_d$.

Assume that $(S,L)\in\mM_d$ is associated to $X\in\mC_d$ via $\varphi$.
Choose an isomorphism $\HH^2(S,\Z)\cong U_4^{\perp}\subset\extktlat$ sending $L$ to $\ell_d$,
and consider the induced Hodge structure on $\extktlat$.
There are isometries of sub-Hodge structures
\[\HH^2(F(X),\Z)\cong\lambda_1^{\perp}\cong\psi(\lambda_1)^{\perp}
=(e_4+f_4)^{\perp}\cong\HH^2(M_S(1,0,-1),\Z),\]
where the sign in $M_S(1,0,-1)$ appears because we view the Mukai vector as an element of $\mukailat$.
By Markman's birational Torelli theorem for manifolds of $\kthree^{[n]}$ type,
\cite[Cor.~9.9]{Markman},
$F(X)$ is birational to $M_S(1,0,-1)\cong\hilb^2(S)$.
\end{proof}

\begin{corollary}
When $d\equiv 0\mod 6$ satisfies \threestar, then
$\hilb^2(S)\sim_{\bir}\hilb^2(\Stau)$ for any K3 surface $(S,L)\in\mM_d$.
\end{corollary}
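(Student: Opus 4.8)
The plan is to relate both Hilbert schemes to the Fano variety $F(X)$ of the single cubic fourfold $X$ that the $\tau$-pair is associated to, using the preceding lemma, and then to pass from the generic member of $\mM_d$ to an arbitrary one by specialization. The starting point is Theorem~\ref{VectorPlusPol}, which identifies $\Stau$ with $M_S(3,L,d/6)$, so both $\hilb^2(S)$ and $\hilb^2(\Stau)$ are honest families of hyperk\"ahler fourfolds of $\kthree^{[2]}$ type over $\mM_d$.

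First I would fix the particular rational map $\varphi\colon\mM_d\dashrightarrow\mC_d$ furnished by the preceding lemma, so that $(S,L)$ associated to $X$ via $\varphi$ forces $\hilb^2(S)\sim_{\bir}F(X)$. The key point is that $(S,L)$ and $(\Stau,\Ltau)=\tau(S,L)$ are associated to one and the same cubic fourfold. To see this I would use that $\tau$ is the covering involution of $\overline{\gamma}$ and that $\varphi$ factors as the normalization $\mCbar_d^{\norm}\to\mCbar_d$ composed with $\gamma$, which is itself the restriction of $\overline{\gamma}$; hence $\overline{\gamma}\circ\tau=\overline{\gamma}$ gives $\varphi\circ\tau=\varphi$ wherever both sides are defined. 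Crucially, by Proposition~\ref{TauIndependent} the involution $\tau$ does not depend on the chosen isomorphism $\polktlat\cong\kdperp$, so this conclusion is valid for the specific $\varphi$ supplied by the lemma even though that $\varphi$ is a priori a different choice from the one used to set up $\tau$. Thus for generic $(S,L)$ both members of the $\tau$-orbit lie in the domain of $\varphi$ and map to the same $X\in\mC_d$.

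Applying the lemma once to $(S,L)$ and once to $(\Stau,\Ltau)$, both associated to $X$, then yields $\hilb^2(S)\sim_{\bir}F(X)\sim_{\bir}\hilb^2(\Stau)$, and hence $\hilb^2(S)\sim_{\bir}\hilb^2(\Stau)$ for generic $(S,L)\in\mM_d$. To upgrade ``generic'' to ``all'', I would invoke that birationality specializes in families of hyperk\"ahler manifolds: the two families $\hilb^2(S)$ and $\hilb^2(\Stau)$ over $\mM_d$ have fibres that are birational over a dense open subset, so the birationality propagates to every $(S,L)\in\mM_d$, exactly as in the case $d=2(n^2+n+1)$ treated above. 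The step I expect to be the main obstacle is the bookkeeping in the second paragraph, namely verifying that the purely period-theoretic involution $\tau$ genuinely sends $(S,L)$ to the second K3 surface associated to the same cubic $X$ for the lemma's choice of $\varphi$; Proposition~\ref{TauIndependent} is precisely what makes this identification harmless, while the concluding specialization is routine.
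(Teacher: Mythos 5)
Your proposal is correct and follows the same route the paper intends: the corollary is exactly the preceding lemma applied to both members of a $\tau$-fibre (which map to the same cubic since $\tau$ is the covering involution of $\overline{\gamma}$ independently of the choice of $\varphi$, by Proposition~\ref{TauIndependent}), combined with the specialization of birationality in families of hyperk\"ahler manifolds already invoked earlier in the section for the case $d=2(n^2+n+1)$. Your second paragraph in fact makes explicit a bookkeeping point the paper leaves implicit, but the substance of the argument is identical.
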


The following proposition shows that we have more than just birationality:
if $d$ is such that $\hilb^2(S)\sim_{\bir}\hilb^2(\Stau)$, then for $S$ generic,
$\hilb^2(S)$ and $\hilb^2(\Stau)$ are isomorphic.

\begin{proposition}\label{BirModelHilb}
Let $(S,L)$ be a polarized K3 surface of degree $d$
with $\pic(S)=\Z L$ and $3| d$. Then $\hilb^2(S)$ has only one birational model.
\end{proposition}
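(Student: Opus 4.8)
The plan is to translate the statement into the wall-and-chamber structure of the movable cone of $\hilb^2(S)$. Recall that $\HH^2(\hilb^2(S),\Z)\cong\ktlat\oplus\Z\delta$ with the Beauville--Bogomolov form, where $\delta^2=-2$ and $2\delta$ is the class of the exceptional divisor of the Hilbert--Chow morphism. Since $\pic(S)=\Z L$, this gives $\ns(\hilb^2(S))=\Z L\oplus\Z\delta$ with $L^2=d$, $(L,\delta)=0$ and $\delta^2=-2$. Birational models of a manifold $M$ of $\kthree^{[2]}$ type are in bijection with the chambers into which the movable cone of $M$ is subdivided by its flopping walls, two models on either side of such a wall being related by a Mukai flop; so I want to show that the interior of the movable cone of $\hilb^2(S)$ contains no flopping wall, which forces a single chamber and hence a unique birational model.

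The key input I would use is the classification of walls for manifolds of $\kthree^{[2]}$ type (Bayer--Macrì, Mongardi; see also \cite{Markman}): a flopping wall is the orthogonal complement of a primitive class $\rho\in\ns(\hilb^2(S))$ with $\rho^2=-10$ and divisibility $2$ in $\HH^2(\hilb^2(S),\Z)$, whereas classes of square $-2$ produce only divisorial contractions, which bound the movable cone and therefore do not separate distinct birational models. Granting this, it suffices to prove that $\ns(\hilb^2(S))$ contains no primitive class $\rho$ with $\rho^2=-10$ and divisibility $2$; in fact I will show no such class exists at all, which is stronger than merely checking the movable cone.

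To carry this out, write $\rho=xL+y\delta$. As $\ktlat$ is unimodular and $L$ is primitive, $L$ has divisibility $1$ in $\ktlat$; pairing $\rho$ with $\ktlat$ and with $\delta$ then shows that the divisibility of $\rho$ equals $\gcd(x,2y)$. Divisibility $2$ forces $x$ to be even, say $x=2x'$, and the equation $\rho^2=-10$ becomes $4dx'^2-2y^2=-10$, i.e. $y^2=2dx'^2+5$. This is where the hypothesis $3\mid d$ enters: reducing modulo $3$ gives $y^2\equiv 2\pmod 3$, which is impossible since every square is $\equiv 0$ or $1$ modulo $3$. Hence no such $\rho$ exists, the movable cone of $\hilb^2(S)$ has no interior flopping wall, and $\hilb^2(S)$ has only one birational model. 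The one genuinely delicate step is the wall classification, i.e. knowing that the flopping walls are exactly the $(-10)$-classes of divisibility $2$ and that the $(-2)$-classes contribute only divisorial walls; everything after that reduces to the elementary congruence above.
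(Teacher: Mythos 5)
Your proposal is correct and takes essentially the same approach as the paper: the same wall classification for $\kthree^{[2]}$-type fourfolds (the paper cites Debarre--Macr\`i), the same identification $\ns(\hilb^2(S))=\Z L\oplus\Z\delta$, and the same contradiction modulo $3$. The only cosmetic differences are that you use the divisibility-$2$ condition to force $x$ even while the paper's congruence excludes all $(-10)$-classes regardless of divisibility, and that you conclude via the chamber--model correspondence where the paper invokes Fujiki's criterion that a birational map pulling back an ample class to an ample class is an isomorphism.
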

\begin{proof}
By \cite[Thm.~5.1]{DebarreMacri}, the walls of the ample cone
of $\hilb^2(S)$ in the interior of the movable cone are given by the hypersurfaces
$x^{\perp}\subset\ns(\hilb^2(S))\otimes\R$ for all ${x\in\ns(\hilb^2(S))}$
of square $-10$ and divisibility two. We will show that there are no such $x$.

There is an isomorphism 
\[\ns(\hilb^2(S))\cong\ns(S)\oplus\Z\delta = \Z L\oplus \Z\delta,\]
where $\delta$ is a $(-2)$-class orthogonal to $ L$ \cite{BeauvilleHilb}.
So any class in $\ns(\hilb^2(S))$ is given by $aL+b\delta$ for some $a,b\in\Z$,
and its square is $a^2d-2b^2$.
Setting this equal to $-10$ gives the Pell equation $b^2-a^2d/2 = 5$
which, after reducing modulo 3, gives $b^2\equiv 2\mod 3$. This is not possible.

It follows that under any birational map $\hilb^2(S)\dashrightarrow Y$
the pullback of an ample class is ample, thus the map is an isomorphism \cite{Fujiki}.
\end{proof}

\begin{remark}\label{AddingtonIsomorphic}
This also implies that when $d$ satisfies \threestarsp and $3|d$,
then for a generic cubic fourfold $X$ of discriminant $d$,
$F(X)$ is actually \emph{isomorphic} to $\hilb^2(S)$ for a K3 surface $S$ associated to $X$.
It would be interesting to find out what happens when $3\centernot|d$.
In that case there can be divisor classes of square $-10$ (for instance, when $d=62$),
but one would have to check the divisibility to find out
whether $\hilb^2(S)$ has more than one birational model.
\end{remark}

It is natural to ask for the exact conditions on $d$ for $\hilb^2(S)$
to be birational to $\hilb^2(\Stau)$, for all $S$ of degree $d$.
It turns out that \threestarsp is too strong.
We use the following results of \cite{MMY}.

\begin{proposition}[{\cite[Prop.~2.1]{MMY}}]\label{IsoModuliOfSheaves}
Let $(S,L)$ be a polarized K3 surface with ${\pic(S)=\Z L}$.
Let ${v = (x,cL,y)}$ be a primitive isotropic Mukai vector
such that $M_S(v)$ is a fine moduli space. Then
$v = (p^2r,pqL,q^2s)$ for some integers $p,r,q,s$ with $\gcd(pr,qs) = 1$
and $(L)^2=2rs$, and there is an isomorphism $M_S(v)\cong M_S(r,L,s)$.
Moreover, $M_S(r,L,s)$ is isomorphic to $M_S(r',L,s')$ if and only if $\{r,s\}=\{r',s'\}$.
\end{proposition}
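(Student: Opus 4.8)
The plan is to treat the two assertions separately: the arithmetic normal form $v=(p^2r,pqL,q^2s)$, and the isomorphism classification of the surfaces $M_S(r,L,s)$.

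\textbf{The normal form.} First I would note that fineness forces $\gcd(x,y)=1$. Recall that $M_S(v)$ is fine exactly when $(v,-)\colon\widetilde{\HH}{}^{1,1}(S,\Z)=\HH^0(S,\Z)\oplus\ns(S)\oplus\HH^4(S,\Z)\to\Z$ is surjective (cf.\ \cite[Ch.~10]{LecturesOnK3}); since $(v,(1,0,0))=-y$, $(v,(0,L,0))=cd$ and $(v,(0,0,1))=-x$, this says $\gcd(x,y,cd)=1$. Writing $d=2m$ (even, as $\HH^2(S,\Z)$ is even), isotropy reads $xy=c^2m$. If a prime $\ell$ divided both $x$ and $y$ it would divide $c^2m=xy$, hence $cd$, contradicting $\gcd(x,y,cd)=1$; so $\gcd(x,y)=1$. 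Now put $p=\gcd(x,c)$, $q=\gcd(y,c)$. Primitivity gives $\gcd(p,q)=1$, and $\gcd(x,y)=1$ gives $\gcd(p,y)=\gcd(q,x)=1$; from $p^2\mid c^2\mid c^2m=xy$ with $\gcd(p^2,y)=1$ one gets $p^2\mid x$, and likewise $q^2\mid y$. Setting $r=x/p^2$, $s=y/q^2$, the relations $c=\pm pq$, $rs=m=d/2$ and (from $\gcd(p^2r,q^2s)=1$) $\gcd(pr,qs)=1$ drop out. This is the asserted shape.

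\textbf{Reduction and the easy half of the classification.} For the isomorphisms I would work on the algebraic Mukai lattice $\widetilde{\HH}{}^{1,1}(S,\Z)\cong U\oplus\Z L$. By the derived global Torelli theorem (\cite{Orlov}; cf.\ \cite[Prop.~16.3.5]{LecturesOnK3}) together with Mukai's Hodge isometry $\HH^2(M_S(v),\Z)\cong v^{\perp}/\Z v$, two fine isotropic moduli spaces $M_S(v)$ and $M_S(v')$ are isomorphic as soon as $v,v'$ lie in one orbit of the group $G$ of Hodge auto-isometries of $\widetilde{\HH}(S,\Z)$ induced by autoequivalences of $D^b(S)$. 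I would then compute $G$ on $\widetilde{\HH}{}^{1,1}(S,\Z)$ through two generators: tensoring by $L$ acts by the shear $(r,cL,s)\mapsto(r,(c+r)L,s+cd+rd/2)$, and the spherical twist along $\mO_S$ is the reflection in the $(-2)$-class $(1,0,1)$, sending $(r,cL,s)\mapsto(-s,cL,-r)$. Parametrising the isotropic conic $\{c^2d=2xy\}$ by $t=\alpha/\beta$ these become $t\mapsto t+d/2$ and $t\mapsto-(d/2)/t$, with $(r,L,s)\leftrightarrow t=s$ and $v\leftrightarrow t=qs/p$; a Euclidean-algorithm argument in $\langle\, t\mapsto t+d/2,\ t\mapsto-(d/2)/t\,\rangle$, fed by $\gcd(pr,qs)=1$, reduces $qs/p$ to $s$ and hence proves $M_S(v)\cong M_S(r,L,s)$. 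The same reflection, composed with the shift and the derived dual, realises the swap $\{r,s\}\to\{s,r\}$, giving $M_S(r,L,s)\cong M_S(s,L,r)$; this is the ``if'' direction of the last claim.

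\textbf{Completeness of the invariant.} For the converse I would extract a $G$-invariant recovering the unordered pair. Computing as above, $\ns(M_S(r,L,s))$ is generated by the Mukai vector $(r,0,-s)$ of square $2rs=d$, so every such surface has $\ns\cong\langle d\rangle$ and is a Fourier--Mukai partner of $S$ with transcendental lattice $T_S$, whose discriminant group is cyclic of order $d$. The relevant datum is the gluing isomorphism $\Disc\ns(M_S(v))\xrightarrow{\sim}\Disc T_S$ inside the unimodular lattice $v^{\perp}/\Z v$: it is multiplication by a square root of unity modulo $2d$, and I would show the resulting class in $\{u:u^2\equiv1\mod 2d\}/\{\pm1\}$ corresponds precisely to the coprime factorisation $d/2=rs$, i.e.\ to $\{r,s\}$. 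An isomorphism $M_S(r,L,s)\cong M_S(r',L,s')$ induces a Hodge isometry $v^{\perp}/\Z v\cong v'^{\perp}/\Z v'$ and hence identifies these invariants, forcing $\{r,s\}=\{r',s'\}$.

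The main obstacle is exactly this last step, because the conclusion must hold for \emph{every} $S$ with $\pic(S)=\Z L$, not only generic ones: a special $S$ may carry Hodge isometries of $T_S$ beyond $\pm\id$, and I must check that their induced action on $\Disc T_S\cong\Z/d\Z$ cannot merge the square-root-of-unity invariants attached to distinct coprime factorisations of $d/2$. This is the lattice-theoretic heart of the matter -- the same input that governs the Fourier--Mukai partner count for $\rho=1$ K3 surfaces \cite{Oguiso} -- and matching the $G$-orbit count to the factorisation count uniformly in $S$ is where the genuine work lies; the normal form and the explicit reductions of the first two steps are comparatively routine.
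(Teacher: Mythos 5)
First, a point of comparison: the paper does not prove this statement at all --- it is imported verbatim from \cite[Prop.~2.1]{MMY}, whose proof rests on the Fourier--Mukai partner classification of Hosono--Lian--Oguiso--Yau/Oguiso. So your attempt must stand on its own, and it does not: the middle step fails. Your normal form $v=(p^2r,pqL,q^2s)$ is fine, but the claim that a ``Euclidean-algorithm argument'' in the group $\Gamma=\langle\, t\mapsto t+d/2,\ t\mapsto -(d/2)/t\,\rangle$ (the parameter actions of $-\otimes L$ and the spherical twist, even enlarged by shift and derived dual, which only add $t\mapsto -t$) carries $t=qs/p$ to $t=s$ is false. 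Rescaling $z=2t/d$ and writing $m=d/2$, the group becomes $\langle\, z\mapsto z+1,\ z\mapsto -1/(mz)\,\rangle$, which for $m>4$ is a Fuchsian group of the \emph{second kind}: a free product $\Z\ast\Z/2\Z$ whose fundamental domain $\{|{\Re}\,z|\le 1/2,\ |z|\ge 1/\sqrt{m}\}$ meets $\R$ in two free segments because $1/\sqrt{m}<1/2$. Such a group has \emph{infinitely many} orbits on $\P^1(\Q)$, while the surfaces $M_S(v)$ fall into finitely many isomorphism classes (FM partners of $S$ are finite in number), so $\Gamma$-equivalence cannot compute isomorphism. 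Concretely, take $d=12$ and $v=(25,10L,24)$: then $(p,q,r,s)=(5,2,1,6)$, so the proposition asserts $M_S(v)\cong M_S(1,L,6)\cong S$, yet $v$ is not $\Gamma$-equivalent to $(1,L,6)$. Indeed $v$ has parameter $z=2/5$, which $z\mapsto -1/(6z)$ sends to $-5/12$, an interior point of the free side $(-1/2,-1/\sqrt{6})$ and hence an ordinary point of $\Gamma$, whereas $(1,L,6)$ has parameter $z=1$, which lies in the orbit of the cusp $\infty$ and hence in the limit set; limit set and ordinary set are disjoint invariant sets. (Your Euclidean reduction visibly stalls here: from $(p,q)=(5,2)$ the moves $q\mapsto q\pm kpr$ and swap never reach $(1,1)$.) What the argument actually requires is transitivity, on fine isotropic vectors with fixed $\{r,s\}$, of the \emph{full} group of isometries of $\widetilde{\HH}{}^{1,1}(S,\Z)$ acting by $\pm\id$ on the discriminant group --- all of these glue with $\pm\id$ on $T(S)$ to Hodge isometries of $\widetilde{\HH}(S,\Z)$ and are realized by Fourier--Mukai equivalences --- and that transitivity is a genuine arithmetic/lattice-theoretic computation (precisely the content of \cite{MMY} and its sources), not something generated by two explicit autoequivalences.

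Two further remarks. Your last step is a plan rather than a proof --- you yourself defer the ``only if'' direction to ``where the genuine work lies'' --- so that half is also open in your write-up. But note that the obstacle you flag there is illusory: for $\rho(S)=1$ the transcendental lattice has odd rank $21$, and the group of Hodge isometries of $T(S)$ is finite cyclic of order $n$ acting faithfully on $\HH^{2,0}(S)$, which makes $T(S)\otimes\Q$ a $\Q(\zeta_n)$-vector space and forces $\varphi(n)\mid 21$; since $\varphi(n)$ is even for $n>2$, only $\pm\id$ occur, for \emph{every} such $S$, not just generic ones. So the completeness of the invariant does reduce cleanly to the discriminant-gluing computation you sketch; the real missing mathematics in your proposal is the transitivity statement above.
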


\begin{proposition}[{\cite[Prop.~2.2]{MMY}}]\label{BirNumbers}
Let $S_1$ and $S_2$ be two derived equivalent K3 surfaces of Picard number one.
Then $\hilb^n(S_1)\sim_{\bir}\hilb^n(S_2)$ if and only if $S_2 \cong M_{S_1}(p^2r,pqL,q^2s)$
for some $p,q$ with $p^2r(n-1)-q^2s=\pm 1$.
Moreover, $\{r,s\}$ is uniquely determined by $S_2$.
\end{proposition}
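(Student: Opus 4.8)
The plan is to translate the birational condition into a statement inside the Mukai lattice $\widetilde{\HH}(S_i,\Z)$. First I would write each Hilbert scheme as a moduli space of sheaves, $\hilb^n(S_i)\cong M_{S_i}(v_i)$ with $v_i=(1,0,1-n)$ and $v_i^2=2n-2$, so that $\HH^2(\hilb^n(S_i),\Z)\cong v_i^{\perp}\subset\widetilde{\HH}(S_i,\Z)$ by Mukai's description. By Markman's birational Torelli theorem \cite{Markman}, $\hilb^n(S_1)$ and $\hilb^n(S_2)$ are birational if and only if there is a parallel-transport Hodge isometry $v_2^{\perp}\cong v_1^{\perp}$; gluing such an isometry with $v_2\mapsto v_1$ via Lemma~\ref{extendauto}, this is equivalent to the existence of a Fourier--Mukai induced (hence orientation-preserving) Hodge isometry $g\colon\widetilde{\HH}(S_2,\Z)\to\widetilde{\HH}(S_1,\Z)$ with $g(v_2)=v_1$. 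The whole statement thus reduces to deciding when such a $g$ exists.

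For the forward direction, suppose $g$ exists and put $w:=g((0,0,1))\in\widetilde{\HH}(S_1,\Z)$, a primitive isotropic algebraic class, which I may assume of positive rank after a sign adjustment. Since $g$ comes from a Fourier--Mukai equivalence taking the point class of $S_2$ to $w$, derived Torelli \cite{Orlov} identifies $S_2\cong M_{S_1}(w)$, and Proposition~\ref{IsoModuliOfSheaves} lets me write $w=(p^2r,pqL,q^2s)$ with $(L)^2=2rs$. The key point is that $(1,0,1)=v_2+n(0,0,1)$ forces $g((1,0,1))=v_1+nw$; imposing $(v_1+nw)^2=(1,0,1)^2=-2$ and using $v_1^2=2n-2$, $w^2=0$ collapses at once to $(v_1,w)=-1$. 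Computing the Mukai pairing gives $(v_1,w)=(1,0,1-n)\cdot(p^2r,pqL,q^2s)=p^2r(n-1)-q^2s$, so the condition reads $p^2r(n-1)-q^2s=-1$; the orientation-reversing choice $w\mapsto -w$ produces the value $+1$, so both signs occur.

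Conversely, given $p,q$ with $p^2r(n-1)-q^2s=\pm1$, I set $w=(p^2r,pqL,q^2s)$, a primitive isotropic class with $(v_1,w)=\mp1$, and note $M_{S_1}(w)\cong M_{S_1}(r,L,s)\cong S_2$. The Fourier--Mukai equivalence underlying this isomorphism gives a Hodge isometry $\phi\colon\widetilde{\HH}(S_2,\Z)\to\widetilde{\HH}(S_1,\Z)$ with $\phi((0,0,1))=w$. Now $\phi(v_2)$ and $v_1$ are algebraic classes of the same square $2n-2$ with the same pairing $\mp1$ against the isotropic class $w$; by Eichler's criterion \cite{GrHuSa} there is a Hodge auto-isometry of $\widetilde{\HH}(S_1,\Z)$ fixing $w$ and sending $\phi(v_2)$ to $v_1$ (an Eichler transvection in $w^{\perp}$, which preserves the period because $w$, $v_1$, $\phi(v_2)$ are all algebraic). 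Composing yields a Fourier--Mukai induced $g$ with $g(v_2)=v_1$, whose restriction $v_2^{\perp}\to v_1^{\perp}$ is a parallel-transport Hodge isometry, so Markman's theorem gives the birationality. Finally, the uniqueness of $\{r,s\}$ is exactly the last assertion of Proposition~\ref{IsoModuliOfSheaves}, since $rs=(L)^2/2$ is fixed and $M_{S_1}(r,L,s)\cong M_{S_1}(r',L,s')$ only when $\{r,s\}=\{r',s'\}$.

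The arithmetic here is elementary; the hard part will be the Torelli bookkeeping. Concretely, one must make sure that $g$ is genuinely realized by a Fourier--Mukai equivalence and is a parallel-transport operator, so that both the identification $S_2\cong M_{S_1}(w)$ and the final passage to birationality of the moduli spaces are legitimate. This is where the orientation and monodromy conventions enter, and it is also where the two signs in $p^2r(n-1)-q^2s=\pm1$ are pinned down (for the value $+1$ one must compose the equivalence with a duality or shift to keep $w$ of positive rank). The other delicate point is the lattice extension in the sufficiency direction---matching the transcendental Hodge structures and correcting by an Eichler transvection---though this follows the standard pattern via Lemma~\ref{extendauto} and \cite{GrHuSa}.
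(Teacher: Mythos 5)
First, a point of order: the paper does \emph{not} prove this proposition --- it is imported verbatim from \cite[Prop.~2.2]{MMY}, so there is no internal argument to compare yours against. Judged on its own terms, your reconstruction follows what is essentially the standard (and the cited source's) route, and it is sound in outline: by Markman \cite[Cor.~9.9]{Markman}, together with the fact that for a moduli space $M_S(v)$ the canonical Mukai-lattice embedding of $\HH^2$ is $v^{\perp}\subset\widetilde{\HH}(S,\Z)$, birationality of $\hilb^n(S_1)$ and $\hilb^n(S_2)$ is equivalent to the existence of an orientation-preserving Hodge isometry $g\colon\widetilde{\HH}(S_2,\Z)\to\widetilde{\HH}(S_1,\Z)$ with $g(v_2)=v_1$ (if the extension provided by Markman's theorem sends $v_2\mapsto -v_1$, compose with $-\id$, which preserves orientation because the positive part of the Mukai lattice has rank four); such isometries are exactly the Fourier--Mukai induced ones; and then $S_2\cong M_{S_1}(w)$ with $w=\pm g(0,0,1)$, the numerical condition being $(v_1,w)=\pm 1$. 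Incidentally, your detour through $(1,0,1)=v_2+n(0,0,1)$ is unnecessary: since $g$ is an isometry, $(v_1,w)=(g(v_2),g(0,0,1))=(v_2,(0,0,1))=-1$ directly.

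Three points need tightening. (1) In the converse direction your sign bookkeeping fails when $p^2r(n-1)-q^2s=+1$: then $(\phi(v_2),w)=-1\neq +1=(v_1,w)$, and \emph{no} isometry fixing $w$ can carry $\phi(v_2)$ to $v_1$, since pairings against a fixed vector are preserved. The repair is to compose $\phi$ with $-\id$ (the shift $[1]$ on the derived level, again orientation-preserving) before transvecting, after which both vectors pair to $-1$ with $-w$; your closing paragraph gestures at this fix but attaches it to the wrong direction of the proof. (2) Eichler's criterion as stated in \cite[Prop.~3.3]{GrHuSa} only produces \emph{some} isometry of the ambient lattice identifying primitive vectors of equal square and discriminant class; it does not by itself give one fixing $w$ and acting trivially on the transcendental lattice $T(S_1)$. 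What actually does the job is the single explicit Eichler transvection $t_{w,a}$ with $a=\phi(v_2)-v_1$: a short computation shows $t_{w,a}(\phi(v_2))=v_1$ precisely when the two vectors have equal square and equal pairing with $w$, and $t_{w,a}$ fixes $T(S_1)$ pointwise because $w$ and $a$ are algebraic. So your parenthetical claim is correct, but the justification should be this computation, not the criterion. (3) In the forward direction, $w=g(0,0,1)$ may have rank zero, in which case isotropy forces $w=\pm(0,0,1)$ and no sign adjustment makes the rank positive; this is exactly the case $p=0$ (so $S_2\cong S_1$, with the condition holding as $0-q^2s=-1$), which the statement covers but your positive-rank reduction silently excludes. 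None of these is fatal; they are the standard sign and edge-case issues in this argument, and with them patched your proof is complete.
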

Note that $p^2r(n-1)-q^2t=\pm 1$ is equivalent to
$\left((1,0,1-n),(p^2r,pqL,q^2t)\right)=\pm 1$.
So when $(p^2r,pqL,q^2s)$ is primitive then $M_{S_1}(p^2r,pqL,q^2s)$ is a fine moduli space,
isomorphic to $M_{S_1}(r,L,s)$ by Proposition~\ref{IsoModuliOfSheaves}.

\medskip
Our description of $\tau$ gave us $\Stau=M_S(3,L,d/6)$, so $r=3$ and $s=d/6$.
Thus, Proposition~\ref{BirNumbers} tells us that $\hilb^2(S)\sim_{\bir}\hilb^2(\Stau)$
if and only if there exist non-zero integers $p,q$ such that $3p^2-(d/6)q^2=\pm 1$.
Note that $3p^2-(d/6)q^2=1$ does not happen in our case: since $d/6\equiv 1\mod 3$,
reducing modulo 3 gives $q^2\equiv 2\mod 3$ which is not possible.

\begin{corollary}\label{HilbIsoCor}
Suppose that $\rho(S)=1$. Then $\hilb^2(S)$ and $\hilb^2(\Stau)$ are birational
if and only if there exists an integral solution to the equation
\[F:3p^2-(d/6)q^2=-1.\]
Equivalently, $\hilb^2(S)$ admits a line bundle of degree 6.
\end{corollary}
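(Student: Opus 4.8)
The plan is to derive both equivalences from results already established, so that only two short elementary computations remain. Throughout I will use that $\rho(S)=1$ forces $\pic(S)=\Z L$, that $\Stau\cong M_S(3,L,d/6)$ by Theorem~\ref{VectorPlusPol}, and that $d/6\equiv 1\mod 3$; the latter is part of the standing hypotheses of this section, and in fact holds automatically when $6\mid d$ and \twostarsp is satisfied, since then every odd prime dividing $d/6$ is $\equiv 1\mod 3$.

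First I would prove the equivalence of birationality with solvability of $F$ by invoking Proposition~\ref{BirNumbers} with $n=2$. Since $S$ and $\Stau$ are derived equivalent K3 surfaces of Picard number one with $\Stau\cong M_S(3,L,d/6)$, that proposition gives $\hilb^2(S)\sim_{\bir}\hilb^2(\Stau)$ if and only if $3p^2-(d/6)q^2=\pm 1$ admits an integral solution. The remaining point is to discard the $+1$ case: reducing $3p^2-(d/6)q^2=1$ modulo $3$ and using $d/6\equiv 1\mod 3$ forces $q^2\equiv 2\mod 3$, which is impossible. Hence birationality is equivalent to the solvability of $F$. I would also record that every integral solution of $F$ automatically has $p,q\neq 0$ — the cases $q=0$ and $p=0$ give $3p^2=-1$ and $(d/6)q^2=1$, both impossible as $d>6$ — so there is no conflict with the nonzero solutions appearing in Proposition~\ref{BirNumbers}.

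For the reformulation in terms of a degree-$6$ line bundle I would use the N\'eron--Severi lattice computed in the proof of Proposition~\ref{BirModelHilb}: since $\pic(S)=\Z L$, we have $\ns(\hilb^2(S))=\Z L\oplus\Z\delta$, with $\delta$ a $(-2)$-class orthogonal to $L$ for the Beauville--Bogomolov--Fujiki form and $L$ of square $d$. A line bundle of degree $6$ is therefore a class $aL+b\delta$ with $a^2d-2b^2=6$, and it remains to match this condition with $F$. For the forward direction, writing $d=6(d/6)$ and dividing by $2$ turns $a^2d-2b^2=6$ into $3(d/6)a^2-b^2=3$; this forces $3\mid b$, and the substitution $b=3c$ produces $3c^2-(d/6)a^2=-1$, a solution of $F$. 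Conversely, given a solution $(p,q)$ of $F$ I would set $(a,b)=(q,3p)$ and verify $a^2d-2b^2=6$ directly. There is no serious obstacle in this argument, the substantive input being Proposition~\ref{BirNumbers} and Theorem~\ref{VectorPlusPol}; the only steps needing genuine care are the divisibility claim $3\mid b$ and the resulting change of variables in this last equivalence.
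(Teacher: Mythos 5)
Your proof is correct and takes essentially the same route as the paper: the first equivalence is exactly the paper's application of Proposition~\ref{BirNumbers} with $(r,s)=(3,d/6)$ and $n=2$, including the mod-$3$ exclusion of the $+1$ case (which the paper carries out in the paragraph preceding the corollary), and the degree-$6$ line bundle reformulation is the same computation $a^2d-2b^2=6 \Leftrightarrow 3b_0^2-(d/6)a^2=-1$ in $\ns(\hilb^2(S))=\Z L\oplus\Z\delta$ that constitutes the paper's proof. The additional checks you record (that solutions of $F$ are automatically nonzero, and that $d/6\equiv 1 \bmod 3$ follows from \twostar) are harmless refinements of the same argument.
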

\begin{proof}
A class $aL+b\delta$ in $\ns(\hilb^2(S))= \Z L\oplus \Z\delta$ has square $a^2d-2b^2=6$,
in particular $b=3b_0$ for some $b_0$, if and only if $3b_0^2-(d/6)a^2=-1$.
\end{proof}

Condition \threestarsp implies that $F$ is solvable. 
Namely, assume we have $a^2d/2 = n^2+n+1$.
Multiplying with 4 gives $(2a)^2d/2 = (2n+1)^2+3$.
As $d$ is divisible by 3, so is $2n+1$, and we find that
$3(\tfrac{2n+1}3)^2-(2a)^2d/6=-1$.

In fact, \threestarsp is equivalent to the existence of
a solution to $F$ with $p$ odd and $q$ even.
One can show that such a solution always exists when $d/6$ is a prime $m\equiv 3\mod 4$.
The following example shows that there exist $d$ for which $F$ is solvable
but \threestarsp does not hold.

\begin{example}
Let $d=78$, which satisfies \twostarsp but not \threestarsp (see \cite{AddingtonRatConj}).
Equation $F$ holds with $p=2$ and $q=1$.
In particular, $\hilb^2(S)\sim_{\bir}\hilb^2(\Stau)$ for any $S$ of degree $78$.
\end{example}

More interesting is the next example, where $F$ is not solvable.
Because $S$ and $\Stau$ are derived equivalent, so are $\hilb^n(S)$ and $\hilb^n(\Stau)$
for all $n\geq 1$ \cite[Prop.~8]{PloogHilb}.
Therefore, we obtain two derived equivalent
Hilbert schemes of two points on K3 surfaces which are not birational.
The first example of this phenomenon was given in \cite[Ex.~2.5]{MMY}.
Note the similarity between $\Stau$ and the K3 surface $Y$ in \cite[Prop.~1.2]{MMY}.

\begin{example}\label{HilbTwoNotBir}
Consider $d = 6\cdot 73$. This again satisfies \twostarsp but not \threestar.
Note that $F$ holds if and only if $(3p)^2-(d/2) q^2=-3$,
which is equivalent to $x^2-(d/2)y^2=-3$ when 3 divides $d$.
This is a usual Pell type equation and one can easily check
(using e.g.\ \cite[Thm.~4.2.7]{Pell}) that it has no solution for $d = 6\cdot 73$.
So for $S$ generic of degree $6\cdot 73$, $\hilb^2(S)$ is not birational to $\hilb^2(\Stau)$.
\end{example}

\subsection{Higher-dimensional Hilbert schemes}\label{HigherDimHilb}
For $n\geq 2$, Proposition~\ref{BirNumbers} tells us that $\hilb^n(S)$ and $\hilb^n(\Stau)$ are
birational if and only if there is a solution to
\[F_1 : 3p^2(n-1)-(d/6)q^2 = -1\]
or to
\[F_2 : 3p^2-(d/6)q^2(n-1) = -1.\]
We give some examples for low $n$.

\medskip
$n=3$. The lowest $d$ satisfying $(**)$ and $6\mid d$ is $d= 42$.
Equation $F_1$ with $n=3$ reads $6p^2-7q^2=-1$, which is solved by $p=q=1$.
In general, one can show that if $d/6$ is a prime $m\equiv 5,7\mod 8$, then
$\hilb^3(S)\sim_{\bir}\hilb^3(\Stau)$.

\medskip
$n=4$. In this case, only $F_1$ is solvable and reads $(3p)^2-(d/6)q^2=-1$.
This is always solvable when $d/6$ is a prime $m\equiv 1\mod 4$.
Namely, note that when $m>2$ is prime,
$x^2-my^2=-1$ has a solution if and only if $m\equiv 1\mod 4$.
Reducing this modulo 3 gives $x^2-y^2\equiv -1\mod 3$. This implies that $x^2\equiv 0\mod 3$.
Writing $x=3x'$ gives $9(x')^2-(d/6)y^2=-1$, i.e.\ $F_1$ with $n=4$.

\medskip
$n=5$. Equations $F_1$ and $F_2$ are given by $3(2p)^2-(d/6)q^2=-1$
and ${3p^2-(d/6)(2q)^2=-1}$ which are both solutions for $F$.
Since in $3x^2-(d/6)y^2=-1$ one of $x,y$ has to be even,
the existence of a solution for $F$ also implies the existence
of a solution for $F_1$ or $F_2$.
This shows that $\hilb^2(S)\sim_{\bir}\hilb^2(\Stau)$ if and only if
$\hilb^5(S)\sim_{\bir}\hilb^5(\Stau)$.

\bibliography{InvolutionBib}

\begin{thebibliography}{AAC10}

\bibitem[AAC10]{Pell}
T.~Andreescu, D.~Andrica, and I.~Cucurezeanu.
\newblock {\em An introduction to {D}iophantine equations}.
\newblock Birkh\"auser Verlag, New York, 2010.

\bibitem[Add16]{AddingtonRatConj}
N.~Addington.
\newblock On two rationality conjectures for cubic fourfolds.
\newblock {\em Math. Res. Lett.}, 23:1--13, 2016.

\bibitem[AT14]{AddingtonThomas}
N.~Addington and R.~Thomas.
\newblock Hodge theory and derived categories of cubic fourfolds.
\newblock {\em Duke Math. J.}, 163:1885--1927, 2014.

\bibitem[BB66]{BB}
W.~L. Baily and A.~Borel.
\newblock Compactification of arithmetic quotients of bounded symmetric
  domains.
\newblock {\em Ann. of Math.}, 84:442--528, 1966.

\bibitem[Bea83]{BeauvilleHilb}
A.~Beauville.
\newblock Vari\'et\'es {K}\"ahleriennes dont la premi\`ere classe de chern est
  nulle.
\newblock {\em J. Differential Geom.}, pages 755--782, 1983.

\bibitem[Bor72]{Borel}
A.~Borel.
\newblock Some metric properties of arithmetic quotients of symmetric spaces
  and an extension theorem.
\newblock {\em J. Differential Geometry}, 6:543--560, 1972.

\bibitem[DM]{DebarreMacri}
O.~Debarre and E.~Macr\`i.
\newblock On the period map for polarized hyperk\"ahler fourfolds.
\newblock To appear in Int. Math. Res. Not. IMRN., also arXiv:1704.01439v2,
  2017.

\bibitem[Fuj81]{Fujiki}
A.~Fujiki.
\newblock A theorem on bimeromorphic maps of {K}\"ahler manifolds and its
  applications.
\newblock {\em Publ. Res. Inst. Math. Sci.}, 17:735--754, 1981.

\bibitem[GHS09]{GrHuSa}
V.~Gritsenko, K.~Hulek, and G.~Sankaran.
\newblock Abelianisation of orthogonal groups and the fundamental group of
  modular varieties.
\newblock {\em J. Algebra}, 322:463--478, 2009.

\bibitem[Has00]{HassettPaper}
B.~Hassett.
\newblock Special cubic fourfolds.
\newblock {\em Compositio Math.}, 120:1--23, 2000.

\bibitem[HP13]{HulekPloog}
K.~Hulek and D.~Ploog.
\newblock {Fourier--Mukai} partners and polarised {K3} surfaces.
\newblock In {\em Arithmetic and geometry of {K3} surfaces and {Calabi--Yau}
  threefolds}, volume~67 of {\em Fields Inst. Commun.}, pages 333--365.
  Springer, New York, 2013.

\bibitem[Huy16]{LecturesOnK3}
D.~Huybrechts.
\newblock {\em Lectures on {K3} surfaces}, volume 158 of {\em Cambridge Studies
  in Advanced Mathematics}.
\newblock Cambridge University Press, 2016.

\bibitem[Laz10]{Laza}
R.~Laza.
\newblock The moduli space of cubic fourfolds via the period map.
\newblock {\em Ann. of Math. (2)}, 172:673--711, 2010.

\bibitem[Loo09]{Looij}
E.~Looijenga.
\newblock The period map for cubic fourfolds.
\newblock {\em Invent. Math.}, 177:213--233, 2009.

\bibitem[Mar11]{Markman}
E.~Markman.
\newblock A survey of {T}orelli and monodromy results for
  holomorphic-symplectic varieties.
\newblock {\em Complex and differential geometry, Springer Proc. Math.},
  8:257--322, 2011.

\bibitem[MMY]{MMY}
C.~Meachan, G.~Mongardi, and K.~Yoshioka.
\newblock Derived equivalent {Hilbert} schemes of points on {K3} surfaces which
  are not birational.
\newblock Preprint, arXiv:1802.00281v2, 2018.

\bibitem[Ogu02]{Oguiso}
K.~Oguiso.
\newblock K3 surfaces via almost-primes.
\newblock {\em Math. Res. Lett.}, 9:47--63, 2002.

\bibitem[Orl97]{Orlov}
D.~O. Orlov.
\newblock Equivalences of derived categories and {K3} surfaces.
\newblock {\em J. Math. Sci.}, 84:1361--1381, 1997.

\bibitem[Plo07]{PloogHilb}
D.~Ploog.
\newblock Equivariant autoequivalences for finite group actions.
\newblock {\em Adv. Math.}, 216:62--74, 2007.

\bibitem[P{\v{S}}{\v{S}}71]{PjaSha}
I.~I. {Pjatecki{\u\i}}-{\v{S}}apiro and I.~R. {\v{S}}afarevi{\v{c}}.
\newblock A {Torelli} theorem for algebraic surfaces of type {K3}.
\newblock {\em Izv. Akad. Nauk SSSR Ser. Mat.}, 35:530--572, 1971.

\bibitem[Voi86]{VoisinTorelli}
C.~Voisin.
\newblock Th\'eor\`eme de {Torelli} pour les cubiques de $\mathbb{P}^5$.
\newblock {\em Invent. Math.}, 86:577--601, 1986.

\bibitem[Zar90]{Zarhin}
Y.~Zarhin.
\newblock Algebraic cycles over cubic fourfolds.
\newblock {\em Boll. Un. Mat. Ital. B (7)}, 4:833--847, 1990.

\bibitem[Zuc77]{ZuckerHC}
S.~Zucker.
\newblock The {Hodge} conjecture for cubic fourfolds.
\newblock {\em Compositio Math.}, 34:199--209, 1977.

\end{thebibliography}
\bibliographystyle{alpha}

\end{document}